\newtheorem{theorem}{Theorem}
\newtheorem{proposition}[theorem]{Proposition}
\newtheorem{lemma}[theorem]{Lemma}
\newtheorem{problem}[theorem]{Problem}
\newtheoremstyle{named}{}{}{\itshape}{}{\bfseries}{.}{.5em}{\thmnote{#3's }#1}
\theoremstyle{named}
\theoremstyle{definition}
\newtheorem{definition}[theorem]{Definition}
\newtheorem{remark}[theorem]{Remark}
\newcommand{\defin}[1]{%
\relax\ifmmode%
\textcolor{blue}{#1}%
\else \textcolor{blue}{\emph{#1}}%
\fi%
}
\newtheorem{THEO}{Theorem}
\newcommand{\bC}{\mathbb{C}}
\newcommand{\setRS}{\hat{\mathbb{C}}}
\renewcommand{\Re}{{\mathrm{Re}}}
\newcommand{\diffz}{\frac{d}{dz}}
\newcommand{\zeros}{\mathcal Z}
\newcommand{\minvset}[1]{{{\mathrm M}_{#1}^T}}
\DeclareMathOperator{\supp}{supp}
\numberwithin{equation}{section}
\numberwithin{theorem}{section}
\title[Holomorphic correspondences and Hutchinson invariant sets]
{Equidistribution of iterations of holomorphic correspondences and Hutchinson invariant sets}
\author[N.~Hemmingsson]{Nils Hemmingsson}
\address{Department of Mathematics,
Stockholm University,
S-10691, Stockholm, Sweden}
\email{nils.hemmingsson@math.su.se}
\begin{document}


\keywords{Holomorphic correspondences, invariant measures,
Hutchinson operators, invariant subsets of $\setRS$}
\subjclass[2020]{Primary 37F05, 37A05 Secondary 32H50}

\begin{abstract}
In this paper, we analyze a certain family of holomorphic correspondences on $\setRS\times\setRS$ and prove their equidistribution properties. In particular, for any correspondence in this family we prove that the naturally associated multivalued map $F$ is such that for any $a\in \bC$, we have that  $(F^n)_*(\delta_a)$  converges to a probability measure $\mu_F$ for which $F_*(\mu_F)=\mu_F d$ where $d$ is the degree of $F$. This result is used to show that the minimal Hutchinson invariant set in degree $n$ of a large class of operators and for sufficiently large $n$  exists and is the support of the aforementioned measure. We prove that under a minor additional assumption, this set is a Cantor set.
\end{abstract}

\maketitle

%

\tableofcontents

\section{Introduction}\label{sec:intro}
A \emph{holomorphic correspondence} on $\setRS\times\setRS$ is an algebraic curve defined by a polynomial in two variables. Consider a polynomial $P(z,w)$ and let $\Gamma_{P}\subset \bC^2$ be the plane algebraic curve given by $P(z,w)=0$. Compactify $\bC^2$ into $\setRS\times\setRS$ and consider the compactification $\overline{\Gamma_{P}}\subset\setRS\times\setRS$. From it we get a well-defined (multivalued) map $F : \setRS \to \setRS$, which sends any point $z_0 \in\setRS$ to the union of points in the second coordinate of the intersection of the horizontal line $\{(z,w)\subset \setRS\times\setRS: z=z_0\}$ with $\overline{\Gamma_{P}}$. We also call $F:z\to w$ a \emph{holomorphic correspondence} and it is the \emph{holomorphic correspondence defined by} $P(z,w)$. We denote by $F^\dagger:w\to z$ its adjoint, i.e. $z\in F^\dagger(w)\Longleftrightarrow (z,w)\in \overline{\Gamma_P}$.

In order to formulate the theorems in this text, we will need the definition of pushforward of currents by holomorphic correspondences.
Let now $ \Gamma\subset \setRS\times\setRS$ be an algebraic curve and $F$ be the correspondence induced by it, i.e. $$F(z)=\{w: (z,w)\in \Gamma \}. $$ Let us first suppose that $\Gamma$ contains no vertical nor horizontal lines, i.e. $$(z_0,\setRS)\not \subset \Gamma, (\setRS,w_0)\not \subset \Gamma.$$  We may decompose $\Gamma$ into its irreducible components:
$$\Gamma=\sum_{j=1}^N m_j \Gamma_j.$$
 Let $\pi_i:\setRS\times\setRS$ be the projection onto the $i$:th coordinate. The components $\Gamma_j$ define currents of integration on $\setRS\times\setRS$, $[\Gamma_j]$ of bi-degree $(1,1)$ and $[\Gamma]$ is the formal sum

$$[\Gamma]=\sum_{j=1}m_j[\Gamma_j].$$

For a current $S$ of bidegree $(0,0)$ or $(1,1)$ on $\setRS$, we define the pullback of $S$ by $F$ as

$$F_*(S)=(\pi_2)_*(\pi_1^*(S)\wedge [\Gamma])$$
whenever $(\pi_1^*(S)\wedge [\Gamma])$ makes sense. In particular, it makes sense when $S$ is a finite regular Borel measure (which may be viewed as a current of bidegree $(1,1)$), which is the focus of the present text. In particular, if $S=\delta_a$, then 

$$\langle F_*\delta_a,\phi\rangle =\sum_{j}m_j\sum_{(a,x)\in \Gamma_j}\phi(x)=\sum_{x\in F(a)} \phi(x),$$
where the points  $x\in F(a)$ are counted with multiplicities (see also \cite{Bharali2016TheDO} and \cite{mayuresh} for a more thorough introduction.)  In \cite{Dinh2006DistributionDV} you can find proper verification for the well-definedness of $F_*$. We also define $F^*=(F^\dagger)_*$.

The holomorphic correspondences we study in this text are as follows.
Fix a univariate polynomial $R_0(w)$ of degree $d$ with only simple zeros and a polynomial $P(z,w)$ of (total) degree at most $d$. Note that for any $d\in \mathbb N$, any bivariate polynomial in the variables $(z,w)$ of (total) degree at most $d$ can be uniquely written  as
$$P(z,w)=\sum_{j=0}^{d}P_j(w)(w-z)^{d-j},$$
with polynomials $P_j(w)$ such that $\deg P_j(w)\leq j$.
Let $$R_{\delta}(z,w)=R_0(w)+\delta P(z,w)$$
where $\delta $ is a (small) parameter. We study correspondences defined by $R_\delta(z,w)$ as above for $\delta$ of small absolute value. Denote by $F_\delta:z\to w$ the holomorphic correspondence defined by $R_\delta(z,w)=0$.  For sufficiently small $\delta$ we have $d=\deg_w(R_\delta(z,w))$ and this is the \emph{degree} of $F_\delta$.

When $R_\delta(z,w)$ is defined as above and $\Gamma$  is the algebraic curve defined by $R_\delta(z,w)=0$, it is easy to see that for sufficiently small $\delta$, $\Gamma$ contains neither vertical nor horizontal lines if and only if $\delta P(z,w)$ is not constant in $z$. If however $\delta P(z,w)$ is constant in $z$, then so is $R_\delta(z,w)$ and $$\Gamma=\{(\setRS,u_j): u_j \text{ is a zero of } R_\delta(z,w)\}.$$ That is, $F_\delta$ is the constant correspondence sending any point to the union of the zeros $u_j$ of $R_\delta(z,w)=R_\delta(z)$. In this case we agree to define $(F_\delta)_*(\delta_a)=\sum_{j=1}^{d}\delta_{u_j}$ for all $a$.

Recall that a set $S$ is \emph{forward invariant} (under $F_\delta$) if $F_\delta(S)\subset S$, \emph{backward invariant} if $F_\delta^\dagger(S)\subset S$ and \emph{completely invariant} if it is both forward and backward invariant. Unless otherwise stated, in this text the term measure always  refers to a regular Borel measure and $\delta_a$ denotes the Dirac measure at $a\in \setRS$. The \emph{minimal} set with a property $\mathcal P$, is (if it exists) the nonempty set $S_0$ with property $\mathcal P$ such that if $S_1$ also has property $\mathcal P$, then $S_0\subset S_1$. We are ready to state the first results of this text. In the theorems below $d\in \mathbb{N}$ is an arbitrary positive integer.

\begin{THEO}\label{thm:A}

Let $R_0(w)$ be any polynomial  of degree $d$ with only simple zeros, $P(z,w)$ any polynomial of (total) degree at most $d$ and let $F_{\delta}:z\to w$ be the holomorphic correspondence defined by $R_0(w)+\delta P(z,w)=0$ for any $\delta\geq 0$. For any $\epsilon>0$ there is $\Delta>0$ such that if $|\delta|<\Delta$, there is a probability measure $\mu_{F_\delta}$ on $\setRS$ with $(F_\delta)_*(\mu_{F_\delta})=\mu_{F_\delta}d$ such that for any $a\in \bC$,

$$\frac{1}{d^m}(F_\delta^m)_*(\delta_a) \xrightarrow{weak *}\mu_{F_\delta}, \quad \text{ as } m\to \infty,$$
and the support $\supp \mu_{F_{\delta}}$ is contained in the $\epsilon$-neighborhood of the zeros of $R_0(z)$. Moreover, $\supp \mu_{F_{\delta}}$  is the minimal closed under $F_{\delta}$ forward invariant set containing a point in $\bC$. 
\end{THEO}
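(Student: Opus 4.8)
The plan is to treat the map $F_\delta$ as a perturbation of the ``unperturbed'' correspondence $F_0$, which is the constant correspondence sending every point to the $d$ simple zeros $u_1,\dots,u_d$ of $R_0(w)$. For $F_0$ one has trivially $\frac1d (F_0)_*(\delta_a)=\frac1d\sum_j \delta_{u_j}=:\mu_{F_0}$ already after one step, so the measure exists and is completely explicit; the real content is to show that turning on a small $\delta$ perturbs this picture only slightly and still produces a genuine limit. I would first set up a normal-families/contraction argument on the space $\mathcal M$ of probability measures on $\setRS$ equipped with a metric inducing the weak-$*$ topology: for fixed small $\delta$ the operator $T_\delta := \frac1d (F_\delta)_*$ maps $\mathcal M$ into itself (using the convention in the constant case and the well-definedness of $F_*$ from \cite{Dinh2006DistributionDV}). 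The key quantitative step is to show that for $|\delta|$ small enough, $T_\delta$ maps the closed set $\mathcal M_\epsilon$ of probability measures supported in the $\epsilon$-neighborhood $U_\epsilon$ of $\{u_1,\dots,u_d\}$ into itself, and that it contracts on it. Contraction should follow because each branch $w=w_j(z)$ of $R_\delta(z,w)=0$ near $u_j$ depends holomorphically on $z$ with derivative $\partial w_j/\partial z = O(\delta)$ (by the implicit function theorem applied to $R_0(w)+\delta P(z,w)$ at the simple zero $u_j$), so $F_\delta$ restricted to $U_\epsilon$ is an $O(\delta)$-Lipschitz multifunction in a suitable sense; pushing forward by a map with Lipschitz constant $c\delta<1$ contracts Wasserstein-type distances by the same factor, uniformly over the $d$ branches, and the $\frac1d$ averaging preserves probability mass. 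By the Banach fixed point theorem there is a unique fixed measure $\mu_{F_\delta}\in\mathcal M_\epsilon$, and $T_\delta^m(\delta_a)\to\mu_{F_\delta}$ for every starting point $a$ whose orbit enters $U_\epsilon$ — which, again by the $O(\delta)$-contraction estimate, happens for every $a\in\bC$ after finitely many steps once $\Delta$ is small. The identity $(F_\delta)_*(\mu_{F_\delta})=d\,\mu_{F_\delta}$ is exactly the fixed point equation, and $\supp\mu_{F_\delta}\subset U_\epsilon$ is built in.

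For the minimality statement I would argue as follows. Let $S$ be any closed forward-invariant set for $F_\delta$ containing some point $a\in\bC$. Then $S\supset \overline{\bigcup_{m\ge 0} F_\delta^m(a)}$, and since $\frac1{d^m}(F_\delta^m)_*(\delta_a)$ is a probability measure carried by $F_\delta^m(a)$, the weak-$*$ limit $\mu_{F_\delta}$ is carried by the closure of $\bigcup_m F_\delta^m(a)$, hence $\supp\mu_{F_\delta}\subset S$. This shows $\supp\mu_{F_\delta}$ is contained in every such $S$. Conversely I must check $\supp\mu_{F_\delta}$ is itself closed and forward invariant: closedness is automatic, and forward invariance follows from $(F_\delta)_*\mu_{F_\delta}=d\,\mu_{F_\delta}$ together with the general fact that $\supp(F_*\nu)\subset F(\supp\nu)$ for a correspondence $F$ and a positive measure $\nu$ (so $\supp\mu_{F_\delta}=\frac1d\supp((F_\delta)_*\mu_{F_\delta})\subset F_\delta(\supp\mu_{F_\delta})$, and one upgrades this to equality of supports, or at least to the inclusion $F_\delta(\supp\mu_{F_\delta})\subset\supp\mu_{F_\delta}$, using that $F_\delta$ restricted to $U_\epsilon$ is a finite-to-one holomorphic branched map with no exceptional behavior for small $\delta$). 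Finally one must verify $\supp\mu_{F_\delta}\neq\emptyset$ and meets $\bC$, which is clear since it lies in $U_\epsilon\subset\bC$ for $\epsilon$ small.

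The main obstacle, I expect, is making the ``$F_\delta$ is $O(\delta)$-Lipschitz on $U_\epsilon$'' statement precise and global enough to control \emph{all} $d^m$ preimages/images simultaneously and to handle starting points $a$ far from the $u_j$. Near each simple zero $u_j$ the implicit function theorem is routine, but one must ensure: (i) that for $|\delta|<\Delta$ \emph{all} $d$ solutions $w$ of $R_\delta(z,w)=0$ lie in $U_\epsilon$ for every $z\in U_\epsilon$ (no branch escapes, no branch point collision — this uses simplicity of the zeros of $R_0$ and the degree bound $\deg P\le d$, which prevents the perturbation from blowing up, including at $z=\infty$ where the special form $P=\sum P_j(w)(w-z)^{d-j}$ with $\deg P_j\le j$ matters); (ii) that an arbitrary $a\in\bC$ is pulled into $U_\epsilon$ in one step already — i.e. for small $\delta$, $F_\delta(a)\subset U_\epsilon$ for \emph{every} $a\in\bC$, which should hold because $R_\delta(a,w)=R_0(w)+\delta P(a,w)$ is a degree-$d$ polynomial in $w$ whose coefficients are within $O(\delta)$ (after normalization) of those of $R_0$ uniformly — here one must be a little careful about how the coefficients of $P(a,w)$ grow with $a$, but the structural form of $P$ together with $\deg_w P_j\le j$ keeps the relevant normalized coefficients bounded, so a single compactness argument on $\setRS$ closes it. Once (i) and (ii) are in hand, the contraction-mapping argument runs and the convergence is uniform in $a$.
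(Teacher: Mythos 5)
Your local picture (step (i) and the Hutchinson/Wasserstein contraction for measures supported near the zeros of $R_0$) is sound, and it is a legitimate, more elementary substitute for the almost-periodic-operator machinery that the paper imports from Lyubich. The genuine gap is step (ii): the claim that for $|\delta|<\Delta$ one has $F_\delta(a)\subset U_\epsilon$ for \emph{every} $a\in\bC$ is false, and it is not just a matter of being ``a little careful''. The normalized coefficients of $R_0(w)+\delta P(a,w)$ are not $O(\delta)$-close to those of $R_0$ uniformly in $a$, because the terms $P_j(w)(w-a)^{d-j}$ contribute coefficients of size $|a|^{d-j}$. Already for $d=1$, $R_0(w)=w$, $P(z,w)=w-z$, the unique image of $a$ is $w=\delta a/(1+\delta)$, which leaves any fixed neighborhood of $0$ as soon as $|a|>(1+\delta)\epsilon/\delta$; since in the theorem $\Delta$ is chosen before $a$, no choice of $\Delta$ makes your one-step absorption true. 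Consequently the convergence for arbitrary $a\in\bC$ is not established: your fallback, that ``the orbit enters $U_\epsilon$ after finitely many steps by the $O(\delta)$-contraction estimate'', does not follow either, because that estimate comes from the implicit function theorem at the simple zeros and is only valid on $U_\epsilon$; it says nothing about points far away. What is needed --- and what the paper proves as \cref{le:1}, using exactly the structural form $P=\sum_j P_j(w)(w-z)^{d-j}$ with $\deg P_j\le j$ --- is a global escape estimate: there is $M$, independent of $\delta$, such that $|z|\ge M$ forces $|w|<|z|/2$ for every $w\in F_\delta(z)$. Combined with the fact that for $z$ in the fixed compact disk $D(0,M)$ all $d$ images lie near the zeros for small $\delta$ (uniform convergence of the perturbation on compacts), this shows that \emph{all} images of $a$ enter $U_\epsilon$ after finitely many steps, with the number of steps depending on $a$ (as the $d=1$ example shows it must), after which your contraction argument can take over. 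Without this ingredient your proof only covers starting points already in a neighborhood of the zeros.

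A smaller point: in the minimality argument you invoke $\supp(F_*\nu)\subset F(\supp\nu)$, which gives $\supp\mu_{F_\delta}\subset F_\delta(\supp\mu_{F_\delta})$ --- the wrong direction. Forward invariance of the support follows from the opposite (and easy) inclusion $F_\delta(\supp\nu)\subset\supp((F_\delta)_*\nu)$, valid because the images of a point vary continuously with the point; then $F_\delta(\supp\mu_{F_\delta})\subset\supp((F_\delta)_*\mu_{F_\delta})=\supp\mu_{F_\delta}$. The first half of your minimality argument (any closed forward-invariant $S$ containing a point $a\in\bC$ carries the measures $\frac1{d^m}(F_\delta^m)_*\delta_a$, hence contains $\supp\mu_{F_\delta}$) agrees with the paper's, but note that it too relies on the convergence for arbitrary $a\in\bC$, i.e.\ on the missing global lemma.
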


\begin{remark} 
If for the minimal $j$ such that $P_j(w) \not \equiv 0$, we have that $Q_j(z)$ is not constant and $j<d$ ,  then  $a$ can be chosen equal to $\infty$, (see \cref{prop:earlier}).
\end{remark}
\begin{remark}
Note that $\Delta$ depends on $R_0(z)$ and $P(z,w)$.
\end{remark}

We can use \cref{thm:A} above to prove results in a related area known as P\'olya--Schur theory.
In this area, the following definition is central, see e.g. \cite{CravenCsordas2004, MR2521123}.

\begin{definition}\label{def0}
Given a linear operator $T:\bC[w]\to\bC[w]$ and a non-empty subset  $S\subset \bC$ of the complex plane, we say that
\emph{$S$ is a $T$-invariant set}, or that \emph{$T$ preserves $S$} if $T$ sends any polynomial with roots in $S$ to a
polynomial with roots in $S$ (or to $0$).
\end{definition}
The main question one studies is the following.

\begin{problem}\label{prob:2}
For a given set $S\subset \bC$, find all linear operators $T:\bC[w]\to\bC[w]$ that preserve $S$.
\end{problem}

Significant progress was made in this area about ten years ago and \cref{prob:2} has been solved for images of the unit disk under Möbius transformations and their boundaries \cite{MR2521123}, as well as for strips \cite{BrandenChasse2017}. However, for more general $S$, \cref{prob:2} is widely open.

The recent preprint \cite{ABS1} introduces the inverse of this problem, which can be stated as follows.
\begin{problem}\label{prob:general}
For a given linear differential operator $T$, find the $T$-invariant sets.
\end{problem}
In \cite{ABS1}, it is shown that any $T$-invariant set is convex (among other nice qualitative properties), but complete characterization seems to be difficult to obtain with current methods. In the same text, two specializations of the question above are formulated, which can be stated as follows. We begin with a definition.

\begin{definition}
For a given linear differential operator $T:\bC[w]\to\bC[w]$, a closed non-empty set $S\subset \bC$ is \emph{continuously Hutchinson invariant} if for every $z\in S$, the zeros of $T[(w-z)^t]$ for $t\geq 0$ lie in $S$.
\end{definition}
This leads to the following question.
\begin{problem}
For a given linear differential operator $T:\bC[w]\to\bC[w]$, find the continuously Hutchinson invariant sets. When they exist, find the minimal under inclusion continuously Hutchinson invariant set.
\end{problem}

In \cite{AHS}, this form of invariance was studied for the case when $T=Q_1(w)\diffz + Q_0(w)$. We managed to solve some of the questions we set out to. Among other results, we have shown that if at least one of the $Q_j(w)$ is non-constant, the minimal under inclusion continuously Hutchinson invariant set exists and found all invariant sets for which this is not the case. Further, we prove that the components of invariant sets are always simply connected and that the minimal continuously Hutchinson invariant set is equal to $\bC$ if and only if $|\deg Q_1-\deg Q_0|\geq 2$ or $\deg Q_1-\deg Q_0=1$ and in the expansion $$\frac{Q_1(w)}{Q_0(w)}=\lambda w+O(1) \quad \text{ as } w\to \infty, $$ we have $\Re(\lambda)<0$. 

Here we study a different specialization of \cref{prob:general}.

\begin{definition}\label{def:old}
For a given linear differential operator $T:\bC[w]\to\bC[w]$, a closed non-empty set $S\subset \bC$ is \emph{Hutchinson invariant} in degree $n$ if for every $z\in S$, the zeros of $T[(w-z)^n]$ lie in $S$.
\end{definition}
This notion leads to the following problem.
\begin{problem}\label{prob:main}
For a given linear differential operator $T:\bC[w]\to\bC[w]$ and $n\geq 1$, describe the Hutchinson invariant sets in degree $n$. When it exists, find the minimal under inclusion Hutchinson invariant set in degree $n$.
\end{problem}

\begin{definition}
For a given linear differential operator $T:\bC[z]\to\bC[z]$,
we denote by $\minvset{H,n}$ the minimal under inclusion Hutchinson invariant set degree $n$, when it exists.
\end{definition}
\begin{remark}
Any $\minvset{H,n}$ is contained in any $T$-invariant set.
\end{remark}
We will often say that a set is \emph{Hutchinson invariant} instead of \emph{Hutchinson invariant in degree $n$}. This will refer to the same thing and $n$ will be clear from the context.
To state the results of the present text, we also need the following definitions

\begin{definition}\label{def:degNonDegFuchs}
Given a differential operator of finite order $k\geq 1$ 
\begin{equation}\label{eq:1}
T = \sum_{j=0}^k Q_j(w)\frac{d^j}{dw^j}
\end{equation} 
with polynomial coefficients, we say that $T$ is \emph{non-degenerate} if
\[
\max_{0\le j\le k} (\deg(Q_{j})-j) =\deg(Q_k)-k.
\]
\end{definition}
If $\max_{0\le j\le k} (\deg(Q_{j})-j) =0$, we say that $T$ it is \emph{exactly solvable}.
The terminology above is in line with \cite{AHS,ABS1,ABS2}. In the literature about holomorphic correspondences, non-degeneracy of a correspondence defined by $P(z,w)=0$ may also refer to the fact that every non-constant factor of $P(z,w)$ involves both $z,w$. This is not what we mean in this text. 

We denote for $n\geq k$ by $T_n:z\to w$ the holomorphic correspondence defined by $\frac{T[(w-z)^n]}{(w-z)^{n-k}}=0$. We will from here on out always assume that $n\geq k$.
If there is a minimal set $S$ with the property that it is closed, contains a point in $\bC$ and is forward invariant set under $T_n$, then $S\setminus {\infty}\subset \bC$ is equal to $\minvset{H,n}$
A fixed point $z$ of $T_n$ is a point such that $z\in T_n(z)$. We also define a periodic point to be such that $z\in T_n^l(z)$ for some $l\geq 1$. If $z$ is a periodic point and there is a local branch $w_I(z)$ of $T_n^l$ such that $w_I(z)=z$ and $w_I'(z)<1$, then we say that $z$ is an attracting periodic point.  We are now ready to state the second result of the text.

\begin{THEO}\label{thm:C}
Suppose that $T$ given as in \eqref{eq:1} is non-degenerate and all zeros of $Q_k(w)$ are simple. For any $\epsilon>0$, there is $N$ such that if $n\geq N$, then
\begin{itemize}
\item
 $T_n$ has the properties of $F_\delta$ in \cref{thm:A} and $\minvset{H,n}$ equals the support of the measure $\mu_{T_n}$.
\item  Each point in $\minvset{H,n}$ is the limit of a sequence of attracting periodic points of $T_n$ and is contained in the $\epsilon$-neighborhood of the zeros of $Q_k(w)$.
\item  If $k\geq 2$ and at least one $Q_j(w)$, $j\neq k$ is not identically zero then $\minvset{H,n}$ is a Cantor set.
\end{itemize}
\end{THEO}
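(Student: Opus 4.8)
The plan is to prove the three bullets in sequence, with the bulk of the work devoted to reducing the statement about $T_n$ to \cref{thm:A} and to establishing the Cantor set structure in the last bullet. First I would observe that for $T$ non-degenerate, writing $T[(w-z)^n]/(w-z)^{n-k}$ explicitly and reorganizing the resulting bivariate polynomial, one obtains (after rescaling $z,w$ and normalizing) a polynomial of the form $R_0(w) + \delta(n) P_n(z,w)$ where $R_0$ is essentially $Q_k(w)$ (up to the leading combinatorial constant $n(n-1)\cdots(n-k+1)$), $R_0$ has only simple zeros by hypothesis, $P_n$ has total degree at most $d = \deg Q_k$, and $\delta(n) \to 0$ as $n \to \infty$ precisely because $T$ is non-degenerate (the top term $Q_k$ dominates each $Q_j$ after accounting for the $j$ derivatives and the combinatorial factors). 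The non-degeneracy condition $\max_j(\deg Q_j - j) = \deg Q_k - k$ is exactly what guarantees the perturbation coefficients are $O(1/n)$ relative to $R_0$. This identification should be made carefully as a lemma; it is mostly bookkeeping with binomial coefficients but I would need to check that the $z$-degrees in each $P_j$ obey the constraint $\deg P_j \le j$ from the canonical expansion in the excerpt. Granting this, \cref{thm:A} applies for $n$ large (so that $|\delta(n)| < \Delta$), giving the measure $\mu_{T_n}$, the equidistribution, the $\epsilon$-neighborhood localization around the zeros of $Q_k$, and the identification of $\supp\mu_{T_n}$ with the minimal forward-invariant-under-$T_n$ closed set meeting $\bC$ — which by the discussion preceding \cref{thm:C} coincides with $\minvset{H,n}$.

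For the second bullet, I would show each point of $\supp\mu_{T_n}$ is a limit of attracting periodic points. Since $\mu_{T_n}$ is $F$-balanced ($(T_n)_*\mu_{T_n} = d\,\mu_{T_n}$) and is the equidistribution limit of $d^{-m}(T_n^m)_*\delta_a$, its support is a perfect set on which the dynamics is, in a suitable sense, expanding — more precisely, I would argue that $\supp\mu_{T_n}$ is contained in a small neighborhood $U$ of the simple zeros of $Q_k$, and near a simple zero $w_0$ of $Q_k$, one local branch of $T_n$ behaves like $z \mapsto w_0 + O(z-w_0)/n + \cdots$, hence is strongly contracting. Tracking inverse branches: for each finite orbit segment staying in $U$ there is a composition of such contracting branches, whose fixed point is attracting and lies near the orbit. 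Taking longer and longer orbit segments shadowing a given point of $\supp\mu_{T_n}$ (possible because that point is recurrent for the measure) produces attracting periodic points converging to it. The localization near zeros of $Q_k$ is already delivered by \cref{thm:A}.

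The third bullet — that $\minvset{H,n}$ is a Cantor set when $k \ge 2$ and some $Q_j$, $j\neq k$, is nontrivial — is where I expect the real obstacle. A Cantor set is a nonempty, compact, perfect, totally disconnected metric space; compactness and nonemptiness are immediate, and perfectness follows from the previous bullet (isolated points are impossible since every point is a limit of distinct attracting periodic points, which themselves lie in the set as they are roots of relevant images, or lie in its closure). The crux is total disconnectedness. The idea is that $\supp\mu_{T_n}$ decomposes into $d = \deg Q_k \ge k \ge 2$ small clusters, one near each simple zero of $Q_k$, each of diameter $O(1/n)$; and within each cluster the branch structure of $T_n$ further subdivides it into disjoint pieces because having $k \ge 2$ and a nonzero lower-order term $Q_j$ forces the local correspondence to be genuinely multivalued with separated branch images — iterating, one gets a nested sequence of covers by disjoint compact sets with diameters shrinking geometrically, whence total disconnectedness. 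Making "separated branch images" precise is delicate: I would need a quantitative statement that the $d$ (or at least $\ge 2$) values $w \in T_n(z)$ near the zero set of $Q_k$ stay uniformly separated relative to the diameter of the cluster, uniformly in $z \in \supp\mu_{T_n}$ and in $n$ large, and that this separation is inherited under iteration — essentially an expansion/bounded-distortion estimate for the inverse branches of $T_n$ restricted to $\supp\mu_{T_n}$. The hypothesis that some $Q_j$ with $j \neq k$ is nonzero is what prevents $T_n$ from degenerating to a single-valued map near $Q_k$'s zeros (in which limiting case $\supp\mu_{T_n}$ could be a single point or a connected arc), and the hypothesis $k \ge 2$ presumably ensures $d \ge 2$ so there genuinely are multiple branches to separate. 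I would isolate this separation-and-distortion estimate as the key lemma and then conclude total disconnectedness by the standard argument that a compact set admitting arbitrarily fine covers by pairwise disjoint compact subsets has no nondegenerate connected subset.
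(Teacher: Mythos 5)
Your reduction of the first bullet to \cref{thm:A} is essentially the paper's: write $T[(w-z)^n]/((n)_k(w-z)^{n-k})=Q_k(w)+\sum_{j<k}\frac{(n)_j}{(n)_k}Q_j(w)(w-z)^{k-j}$, note that non-degeneracy gives $\deg Q_j\le j+\deg Q_k-k$ so the canonical degree constraints hold, and that the coefficients $\frac{(n)_j}{(n)_k}\to0$, so \cref{thm:A} applies for $n$ large; that part is fine. The genuine gap is in the third bullet. Your route to total disconnectedness rests on an unproven ``separation and bounded distortion'' lemma asserting that the branch images of $T_n$ stay pairwise disjoint at every scale, so that you get nested covers by \emph{disjoint} compact sets. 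Nothing in the hypotheses obviously yields such disjointness (the local branches $w_1,\dots,w_d$ on $D(0,M)$ are contractions, but their images may overlap), and no such lemma is needed. The paper's argument uses only a quantitative contraction estimate: for $n$ large each branch satisfies $|w_l'(z)|<c/d$ on $\bigcup_j D(u_j,\eta_0)$ with $c<1$ (the extra factor $1/d$ is the whole point), so $T_n^m(\bigcup_j D(u_j,\eta_0))$ is covered by $d^m$ balls whose diameters sum to at most $2\eta_0c^m\to0$; since the support lies in these sets for every $m$, any connected subset has diameter $0$, overlaps or not. Relatedly, your inequality $d=\deg Q_k\ge k\ge2$ is not a consequence of non-degeneracy (which only bounds $\deg Q_j$ from above), so the existence of at least two branches has to be extracted from the hypotheses rather than assumed.

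Two further points. Your deduction of perfectness from the second bullet is too quick: a point can be a limit of attracting periodic points and still be isolated, since it may be its own approximant (think of a support reduced to one attracting fixed point). The paper needs a separate argument (\cref{le:perfect}): it first shows that for $n$ large $u_1\notin T_n(z)$ for $z\in D(0,M)\setminus\{u_1\}$, uses this to produce orbit points distinct from $u_1$ converging to $u_1$, and then transports this sequence to any would-be isolated point $w_0\in T_n^K(u_1)$ by continuity of roots, using that the fiber $\{z:w_0\in T_n^K(z)\}$ is finite so only finitely many transported points can equal $w_0$; you would need to reproduce something of this kind. Finally, in the second bullet your closing-up argument appeals to every point of $\supp\mu_{T_n}$ being ``recurrent for the measure''; invariance of $\mu_{T_n}$ gives recurrence only almost everywhere, not at every point of the support. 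The paper instead uses the implicit characterization $\minvset{H,n}=\overline{\bigcup_j T_n^j(u_1)}$ (\cref{th:implicit}) and closes up compositions $w_I\circ w_1^{\eta}$ through the fixed point $u_1$ via Brouwer's fixed point theorem; alternatively, pointwise recurrence can be derived from minimality of the support, but that must be argued rather than attributed to the measure.
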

\begin{remark}
It is sufficient that one zero of $Q_k(w)$ is simple for $\minvset{H,n}$ to exist (see \cref{prop:prop1})
\end{remark}

\medskip
The structure of the paper is as follows. In \cref{sec:prel} we provide some preliminaries on iterations of holomorphic correspondences and in its subsection \cref{sec:equi} we provide important results concerning the equidistribution of (pre-)images of rational maps and correspondences. \cref{sec:proofsa} contains the proof of \cref{thm:A} and \cref{sec:proofsb} contains the proof of \cref{thm:C}. Lastly \cref{sec:final} contains some final remarks and some unsolved problems related to this text. 

 \medskip
\noindent
\textbf{Acknowledgements.}
I want to thank B.~Shapiro for suggesting this topic and always readily answering any questions I had. I want to thank M.~Lyubich for his interest, support and several relevant references. Furthermore, I want to thank P.~Alexandersson for letting me discuss and check my ideas with him.

\section{Preliminaries on iterations of holomorphic correspondences}\label{sec:prel}

Iteration of holomorphic correspondences is a subject that was mentioned already by P.~Fatou, but in which far fewer results have been obtained so far compared to in classical dynamics studying iterations of single-valued functions. 

In \cite{doi:10.1142/S0218127491000592}, the authors studied iterations of holomorphic correspondences with the help of a somewhat special construction. Instead of using the base space $\setRS$, they use the space of orbits of an iteration, and show that similarly to classical dynamics, there is a decomposition of the space of orbits in completely invariant subsets, one of which is ``regular'' and the other is more ``chaotic'' and highly inferred with branch points.

The text \cite{BullettPenrose2-1994} is an excellent exposition about the iterations of holomorphic correspondences and presents the foundations of the theory. Many of the results in this area have been obtained by S.~Bullett, jointly with C.~Penrose and L.~Lomonoco. They have found interesting examples of what they call \emph{matings} between quadratic polynomials and the modular group \cite{Bullett1988, BullettPenrose1994, BullettPenrose2-1994, Bullett2010, BullettLomonaco}. In \cite{BullettPenrose2-1994} they also define multiple different sets regarding the forward and backward iterations of holomorphic correspondences and conjecture different properties of these sets. The text \cite{komigennu} is an excellent survey of the results mentioned above. 

Recently \cite{siqueira_2021}, established geometric rigidity of certain holomorphic correspondences in the family $(w-c)^q=z^p$. Moreover, the paper \cite{LeeLyubichMakarov} studied anti-holomorphic correspondences and found matings similar to that of S.~Bullett and C.~Penrose.

\subsection{Equidistribution of images for rational maps and correspondences}\label{sec:equi}
In this section, we go through important results concerning equidistribution of (pre-)images of rational maps and correspondences and some related results.
In 1983 M.~Lyubich proved the following important generalization of Brolin's theorem.

\begin{theorem}[\cite{ljubich_1983}]\label{th:misha}
Suppose that $R:\setRS\to \setRS$ is a rational map of degree $d\geq 2$. Then there is a probability measure $\mu$ supported on the Julia set of $R$ with the property that $\mu(E)=\mu(f^{-1}(E))$ for any Borel set $E\subset \setRS$ and 
$$\frac{1}{d^n}\sum_{R^n(z)=a}\delta_z \xrightarrow{weak *} \mu \quad \text{as $n\to \infty$}$$
for all $a\in \setRS \setminus \mathcal E$ where $|\mathcal E|\leq 2.$
\end{theorem}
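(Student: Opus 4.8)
This is a classical result; the plan is to give the argument in the potential-theoretic form that meshes with the current formalism above, following in essence \cite{ljubich_1983} (and Brolin in the polynomial case). Throughout, $R^{*}$ denotes the pullback of measures, so $R^{*}\delta_{a}=\sum_{R(z)=a}\delta_{z}$ with roots counted with multiplicity and $(R^{n})^{*}\delta_{a}$ is the sum of unit masses over the $d^{n}$ preimages $R^{-n}(a)$; thus the statement to be proved is $d^{-n}(R^{n})^{*}\delta_{a}\xrightarrow{weak*}\mu$.

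\emph{Existence of $\mu$ and the balanced identity.} First I would fix a smooth probability measure $\omega$ on $\setRS$ (e.g.\ normalized spherical area) and set $\mu_{n}:=d^{-n}(R^{n})^{*}\omega$, a probability measure since $\deg R^{n}=d^{n}$. Since $d^{-1}R^{*}\omega$ and $\omega$ are smooth $(1,1)$-forms of equal total mass on $\setRS$, they differ by $dd^{c}u_{1}$ for some smooth $u_{1}$ (the $dd^{c}$-lemma on the sphere), so
$$\mu_{n+1}-\mu_{n}=d^{-n}(R^{n})^{*}\bigl(d^{-1}R^{*}\omega-\omega\bigr)=dd^{c}\bigl(d^{-n}u_{1}\circ R^{n}\bigr).$$
As $\|d^{-n}u_{1}\circ R^{n}\|_{\infty}=d^{-n}\|u_{1}\|_{\infty}$, the series $g:=\sum_{n\ge0}d^{-n}u_{1}\circ R^{n}$ converges uniformly, so $\mu_{n}\xrightarrow{weak*}\mu:=\omega+dd^{c}g$, a probability measure. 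The map $\nu\mapsto R^{*}\nu$ is weak-$*$ continuous (its adjoint sends a continuous $\varphi$ to the continuous function $w\mapsto\sum_{R(z)=w}\varphi(z)$), so letting $n\to\infty$ in $R^{*}\mu_{n}=d\,\mu_{n+1}$ gives $R^{*}\mu=d\,\mu$, which unwinds to the stated invariance $\mu(R^{-1}(E))=\mu(E)$ for all Borel $E$.

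\emph{Convergence from a point and independence of $a\notin\mathcal E$.} Near $a$ the potential of $\delta_{a}$ is $\log|z-a|$ modulo a smooth term, so $\delta_{a}-\omega=dd^{c}h_{a}$ with $h_{a}\in L^{1}(\setRS)$, and exactly as above $d^{-n}(R^{n})^{*}\delta_{a}-\mu_{n}=dd^{c}v_{n}^{a}$ with $v_{n}^{a}=d^{-n}h_{a}\circ R^{n}$. Everything then reduces to $v_{n}^{a}\to0$ in $L^{1}$, and this holds precisely for $a\notin\mathcal E$. Since $h_{a}$ is bounded above, $v_{n}^{a}\le C d^{-n}\to0$ uniformly from above; the content is the matching lower bound, i.e.\ that $d^{-n}\sum_{R^{n}(z)=a}\log|z-b|\to\int\log|z-b|\,d\mu(z)$ for a.e.\ $b$. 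If $a\in\mathcal E$ the total backward orbit of $a$ is a finite fixed set that $d^{-n}(R^{n})^{*}\delta_{a}$ never leaves, and convergence fails; if $a\notin\mathcal E$ the preimage masses cannot concentrate, which one proves through the standard mechanism that over any simply connected piece $U$ of $\setRS\setminus\overline{P(R)}$ (with $P(R)$ the postcritical set; note $\overline{P(R)}\subseteq R^{-1}(\overline{P(R)})$) the $d^{n}$ inverse branches of $R^{n}$ are univalent, and a distortion-plus-normality argument forces the spherical diameters of their images to tend to $0$, so $R^{-n}(a)$ spreads over $J(R)$ rather than clustering. The degenerate maps with $|\overline{P(R)}|\le2$, namely conjugates of powers $z\mapsto z^{d}$ and of Chebyshev maps, are the only ones with $\mathcal E\ne\emptyset$ and are handled directly; a Riemann--Hurwitz count on a finite completely invariant set gives $|\mathcal E|\le2$ in every case. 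Granting $v_{n}^{a}\to0$, we get $d^{-n}(R^{n})^{*}\delta_{a}=\mu_{n}+dd^{c}v_{n}^{a}\to\mu$, with limit independent of $a\notin\mathcal E$.

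\emph{Support, and the main obstacle.} Finally, $\supp\mu$ is closed and, by $R^{*}\mu=d\mu$, completely invariant; any closed completely invariant set with more than two points contains $J(R)$, while $\mu$ charges no Fatou component $\Omega$ (as $R^{-n}(\Omega)$ stays in the Fatou set with spherical area tending to $0$, forcing $\mu_{n}(\Omega)\to0$ and $\mu(\Omega)=0$), so $\supp\mu\subseteq J(R)$ is infinite and hence $\supp\mu=J(R)$. The principal obstacle in this scheme is the non-concentration estimate for $d^{-n}(R^{n})^{*}\delta_{a}$ at non-exceptional $a$ — equivalently the $L^{1}$-decay of the potentials $v_{n}^{a}$ — and inside it the delicate case of inverse branches whose images drift toward $\overline{P(R)}$; this is the real content of the theorem and is exactly what caps $\mathcal E$ at two points. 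An essentially equivalent, current-free route, closer to Lyubich's original, would instead study the operator $\Lambda\nu:=d^{-1}R^{*}\nu$ on the compact convex space of probability measures, obtain a fixed point by Schauder--Tychonoff, and derive uniqueness and convergence of $\Lambda^{n}\delta_{a}$ from the same inverse-branch distortion estimates; the obstacle is identical in that formulation.
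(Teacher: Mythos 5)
The paper does not prove this theorem; it quotes it from \cite{ljubich_1983} as background, so your sketch has to stand on its own --- and it does not, because the step you yourself flag as ``the real content'' is exactly what is missing. The telescoping construction of $\mu=\omega+dd^{c}g$, the identity $R^{*}\mu=d\mu$, and the reduction of the statement to $v_{n}^{a}=d^{-n}h_{a}\circ R^{n}\to 0$ in $L^{1}$ are all fine, but the entire difficulty of the theorem is that this convergence holds for \emph{every} $a$ outside a set of at most two points, and for that you offer only the heuristic that inverse branches of $R^{n}$ on simply connected pieces of $\setRS\setminus\overline{P(R)}$ have spherical diameters tending to $0$. That claim is false in general: if such a piece meets a Siegel disk or Herman ring the inverse branches converge to non-constant limits and nothing shrinks; if the postcritical set is dense in $\setRS$ the pieces you want need not exist; and the genuinely hard case --- branches whose images drift toward $\overline{P(R)}$, which is where mass could concentrate --- is exactly the one you set aside. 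Lyubich's proof (and the Freire--Lopes--Ma\~n\'e potential-theoretic one) spends essentially all of its effort on this non-concentration estimate, via a quantitative count of ``bad'' inverse branches or, in Lyubich's operator formulation, via the almost-periodicity of the transfer operator; deferring it leaves a proof outline, not a proof.

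Two supporting claims are also wrong as stated. First, you conflate the exceptional set with the postcritical set: $\mathcal E$ consists of the points with finite grand orbit, and $\mathcal E\neq\emptyset$ precisely when $R$ is conjugate to a polynomial (then $\mathcal E=\{\infty\}$ --- this covers Chebyshev maps but equally every other polynomial) or to $z^{\pm d}$ (then $|\mathcal E|=2$); maps with $|\overline{P(R)}|$ small, such as Latt\`es maps, can perfectly well have $\mathcal E=\emptyset$. The Riemann--Hurwitz bound $|\mathcal E|\le 2$ is fine, but the dichotomy you build around it is not. Second, the argument that $\mu$ charges no Fatou component because ``$R^{-n}(\Omega)$ has spherical area tending to $0$'' fails ($R^{-n}(\Omega)$ can be a fixed component of positive area, and $\mu_{n}$ is not area anyway); the correct elementary route is Marty's theorem: normality of $\{R^{n}\}$ on a compact $K$ in the Fatou set bounds the spherical derivatives, so $\mu_{n}(K)=d^{-n}\int_{K}\bigl((R^{n})^{\#}\bigr)^{2}\,d\omega=O(d^{-n})$, giving $\supp\mu\subseteq J(R)$. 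These two points are repairable, but the equidistribution estimate for arbitrary non-exceptional $a$ is not a detail to be waved at --- it is the theorem.
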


We turn to stating the results similar to \cref {th:misha} but where instead of dealing with rational maps, they concern correspondences.  A \emph{rational semigroup} is a semigroup consisting of non-constant rational maps on $\setRS$ with the semigroup operation being function composition. 

\begin{theorem}[\cite{boyd}]
Let $G$ be a finitely generated rational semigroup. Then there is a probability measure $\mu$ on $\setRS$ such that for $a\in \setRS \setminus \mathcal E$ where $|\mathcal E|\leq 2,$

$$\frac{1}{d^n}\sum_{g(z)=a, l(g)=n}\delta_z \xrightarrow{weak *} \mu \quad \text{as $n\to \infty$}.$$
 Furthermore, the support of $\mu$ is the Julia set of $G$, which is the closure of the Julia set of all maps in $G$.

\end{theorem}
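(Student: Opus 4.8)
Write $G=\langle f_1,\dots,f_k\rangle$, put $d_i=\deg f_i$ and $d=\sum_{i=1}^{k}d_i$, and assume some $d_i\ge 2$ (otherwise $G$ is a semigroup of Möbius maps and the statement is degenerate). The plan is to recognise the normalised counting measures in the statement as the orbit of a single Markov operator, and then prove their convergence by adapting the potential-theoretic proof of \cref{th:misha} so that it runs simultaneously over all words. Let $\Lambda\colon C(\setRS)\to C(\setRS)$ be
$$(\Lambda\phi)(a)=\frac1d\sum_{i=1}^{k}\ \sum_{z\in f_i^{-1}(a)}\phi(z)$$
with preimages counted with multiplicity; it is positive, and precisely because $d=\sum_i d_i$ it satisfies $\Lambda 1=1$, so its transpose $\Lambda^*$ preserves the weak-$*$ compact convex set of probability measures on $\setRS$ and, by the Schauder--Tychonoff fixed point theorem, has a fixed point $\mu$. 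A one-step induction gives $(\Lambda^{n}\phi)(a)=d^{-n}\sum_{l(g)=n}\sum_{z\in g^{-1}(a)}\phi(z)$, so the theorem amounts to showing (i) $(\Lambda^*)^{n}\delta_a\xrightarrow{weak *}\mu$ for every $a$ outside a set $\mathcal E$ with $|\mathcal E|\le 2$, and (ii) $\supp\mu$ is the Julia set of $G$.

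For (i) I would follow the Brolin--Lyubich proof of \cref{th:misha}. Fix two generic base points $a,b\in\bC$ and consider the mass-zero signed measures $\nu_n=d^{-n}\sum_{l(g)=n}(g^*\delta_a-g^*\delta_b)$; it is enough to prove $\nu_n\to 0$, for then all the measures $(\Lambda^*)^n\delta_a$ converge to a common limit which, being $\Lambda^*$-invariant, equals $\mu$ (so $\mu$ is unique). Lift each $f_i$ to a homogeneous polynomial map $\tilde f_i\colon\bC^2\to\bC^2$ of degree $d_i$, vanishing only at the origin since $\deg f_i=d_i$; then a word $g$ of length $n$ lifts to $\tilde g$, homogeneous of degree $\deg g=d_{i_1}\cdots d_{i_n}$, with $\sum_{l(g)=n}\deg g=d^{n}$. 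Writing $v_a$ for a linear form on $\bC^2$ vanishing on the lift of $a$, the (single-valued) potential of $\nu_n$ is
$$P_n(z)=\frac1{d^{n}}\sum_{l(g)=n}\Bigl(\log\bigl|v_a(\tilde g(\tilde z))\bigr|-\log\bigl|v_b(\tilde g(\tilde z))\bigr|\Bigr),\qquad\|\tilde z\|=1,$$
and in chordal terms each summand equals $\log\mathrm{dist}(g(z),a)-\log\mathrm{dist}(g(z),b)+O(1)$, which is $O(1)$ whenever $g(z)$ stays away from $a$ and $b$. Since there are only $k^{n}=o(d^{n})$ words of length $n$, the contribution of the ``good'' words to $P_n$ is $O((k/d)^{n})\to 0$; and --- exactly as in \cref{th:misha} --- a distortion/Borel--Cantelli argument controls the contribution of the ``bad'' words off a set which shrinks, in the limit, to a finite exceptional set $\mathcal E$. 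Hence $P_n\to 0$ (locally uniformly off $\mathcal E$, hence in $L^{1}_{\mathrm{loc}}$), so $\nu_n=dd^{c}P_n\to 0$. The points one must discard are exactly those with finite total backward orbit $\bigcup_{g\in G}g^{-1}(a)$, and, as for a single rational map of degree $\ge 2$, there are at most two of them.

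For (ii): from $\mu=\tfrac1d\sum_i f_i^*\mu\ge\tfrac1d f_i^*\mu\ge 0$ one gets $f_i^*\mu\le d\mu$ for each $i$, whence $\mu(U)=0$ forces $\mu(f_i(U))=0$ and $\mu(f_i^{-1}(U))=0$ up to finitely many points; thus the closed set $\supp\mu$ is completely invariant under every generator, and hence under $G$. As $\mu$ is non-atomic off $\mathcal E$, $\supp\mu$ has more than two points, so the classical Montel argument (its complement is $G$-forward invariant, and each $g\in G$ of degree $\ge 2$ omits at least three values there, hence is a normal family there) gives $J(g)\subset\supp\mu$ for all $g\in G$, i.e.\ $\overline{\bigcup_{g\in G}J(g)}\subset\supp\mu$. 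Conversely, $\mu$ charges no component of the Fatou set of $G$: there $G$ is normal, and propagating the balanced inequality $f_i^*\mu\le d\mu$ along longer and longer words precludes any concentration of mass on such a component (equivalently, $\mu$ has strictly positive Lyapunov exponent and so lives on the Julia set). Therefore $\supp\mu=J(G)=\overline{\bigcup_{g\in G}J(g)}$.

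The step I expect to be the real obstacle is the equidistribution estimate in (i): because the generators do not commute and may have different degrees, there is no single map whose iterates produce the measures $(\Lambda^*)^n\delta_a$, so the sub-mean-value and distortion bounds driving \cref{th:misha} must be run uniformly over all $k^{n}$ words of length $n$, and one must verify that the set of base points for which equidistribution fails stays of cardinality at most two; everything else --- the Markov operator, the fixed point, and the identification of the support --- is routine bookkeeping around this core.
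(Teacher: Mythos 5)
This statement is quoted background: the paper cites it to \cite{boyd} and gives no proof, so I can only judge your outline on its own terms. Its skeleton (the operator $\Lambda$ with the correct normalization $d=\sum_i d_i$, potentials of $g^*\delta_a-g^*\delta_b$ on homogeneous lifts, Montel for one inclusion of the support) is the right shape, but the decisive step is missing, and one piece of the logic is actually flawed. First, the entire content of the theorem is exactly what you defer: a bound, uniform over the $k^n$ non-commuting words $g$ of varying degrees, on the words for which $g(z)$ approaches $a$ or $b$, together with the proof that the base points for which this control fails form a set of at most two points. Saying this goes ``exactly as in \cref{th:misha}'' by ``a distortion/Borel--Cantelli argument'' is not a proof: Lyubich's argument in \cite{ljubich_1983} is not of that type at all --- it is the functional-analytic almost-periodicity argument (equicontinuity of $\{A^m\phi\}$ via normal families of inverse branches) that the present paper itself adapts in \cref{sec:proofsa} --- while the potential-theoretic route you chose requires new uniform estimates (e.g.\ on $\int\log\mathrm{dist}(w,a)\,d(g_*\sigma)(w)$ or on capacities of $g^{-1}(D(a,r))$ simultaneously for all $g\in G$), which is precisely the heart of Boyd's paper. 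Second, your reduction ``differences tend to $0$, hence the limit is the $\Lambda^*$-fixed point $\mu$, so $\mu$ is unique'' is unjustified: balanced measures are not unique for semigroups (for a polynomial semigroup $\delta_\infty$ satisfies $\Lambda^*\delta_\infty=\delta_\infty$, since $\frac1d\sum_i f_i^*\delta_\infty=\frac1d\sum_i d_i\,\delta_\infty$), so the Schauder--Tychonoff fixed point need not be the equidistribution limit, and the standard patch $(\Lambda^*)^n\delta_a-\mu=\int\bigl((\Lambda^*)^n\delta_a-(\Lambda^*)^n\delta_b\bigr)d\mu(b)$ requires $\mu(\mathcal E)=0$, which fails exactly for such spurious fixed points. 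You must construct $\mu$ as the limit (e.g.\ of $d^{-n}\sum_{l(g)=n}g^*\omega_{FS}$), not select it abstractly.

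In part (ii), the claim that $\supp\mu$ is completely invariant under every generator is false in general, not merely unproved: from $f_i^*\mu\le d\mu$ you get only $\mu(U)=0\Rightarrow\mu(f_i(U))=0$, i.e.\ $f_i^{-1}(\supp\mu)\subset\supp\mu$; the forward half fails, e.g.\ for $G=\langle z^2,\,z^2/4\rangle$ one has $\supp\mu=J(G)=\{1\le|z|\le4\}$, which $z\mapsto z^2$ maps onto $\{1\le|z|\le16\}\not\subset J(G)$. Fortunately the Montel step only uses the backward half, so $\overline{\bigcup_{g\in G}J(g)}\subset\supp\mu$ survives; but the converse inclusion $\supp\mu\subset J(G)$ is only asserted (``propagating the balanced inequality precludes concentration'', ``positive Lyapunov exponent'') and no argument is given. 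Once (i) is genuinely proved, a correct short route is: pick a non-exceptional $a\in J(G)$ (possible, as $J(G)$ is infinite), use $g^{-1}(J(G))\subset J(G)$ for every $g\in G$ to see that each measure $d^{-n}\sum_{l(g)=n}g^*\delta_a$ is supported in the closed set $J(G)$, and pass to the limit. Finally, Boyd's theorem is proved under the hypothesis that the generators have degree at least two; your dismissal of the remaining cases as ``degenerate'' leaves the mixed case (some $d_i=1$) unaddressed.
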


Note that any finitely generated rational semigroup $\langle f_1,f_2,...,f_n\rangle$ corresponds to a algebraic correspondence as 
$$\prod_{j=1}^n(f_j(z)-w).$$

We suppose now that we are dealing with a correspondence $F:\setRS\to \setRS$ defined by a polynomial $P(z,w)$ of bidegree $(d_0,d_1)$  not containing factors of the form $(w-w_0)$ nor $(z-z_0)$. The numbers $d_1$ and $d_0$ are the number of images of a point under $F$ and $F^\dagger$ respectively, counting multiplicity. The assumption that $P(z,w)$ does not contain factors of the form $(w-w_0)$ or $(z-z_0)$ is not necessary in all of the statements below but makes the presentation much clearer. Moreover, in several cases the results below generalize to complex manifolds other than the Riemann sphere, but we here state them in this setting and refer the curious reader to the original papers for the full statements.  A function $\phi:\setRS\to [\infty,\infty)$ is \emph{quasi-p.s.h.} if it can be locally written as a sum of a smooth and a plurisubharmonic function. A proper subset $A$ of $\setRS$ is \emph{pluripolar} if it is contained in $\{z\in \setRS: \phi(z)=-\infty\}$ for some quasi-p.s.h. function $\phi$. A special case of two theorems in \cite{Dinh2020DynamicsOH} amounts to the following.

\begin{theorem}[\cite{Dinh2006DistributionDV}]\label{dinhsibony}
Suppose that $1\leq d_0<d_1$ and let $\omega_{FS}$ be the normalized Fubini-Study form. Then there is a measure $\mu_F$ such that $F_*(\mu_F)=d_1\mu_F$ and  
$$\frac{1}{d_1^n}(F^n)_*\omega_{FS}  \xrightarrow{weak *} \mu_F \quad \text{as $n\to \infty$}.$$  There is a pluripolar set $\mathcal E\subsetneq\setRS$ such that if $a\notin \mathcal E$, then

$$\frac{1}{d_1^n}(F^n)_*\delta_a  \xrightarrow{weak *} \mu_F \quad \text{as $n\to \infty$}.$$

\end{theorem}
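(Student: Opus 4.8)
The plan is to work in pluripotential theory on $\setRS=\mathbb P^1$, where $H^{1,1}$ is one-dimensional, so every probability measure $\nu$ is of the form $\omega_{FS}+dd^c u_\nu$ for a quasi-p.s.h.\ potential $u_\nu$, unique up to a constant (normalized, say, by $\int u_\nu\,\omega_{FS}=0$). Set $\Lambda:=\tfrac1{d_1}F_*$; since a generic point has $d_1$ images under $F$ with multiplicity, $\Lambda$ preserves probability measures and $\tfrac1{d_1^n}(F^n)_*=\Lambda^n$. I would first record the action of $\Lambda$ on potentials: using that $F_*=(\pi_2)_*\!\bigl(\pi_1^*(\cdot)\wedge[\Gamma]\bigr)$ commutes with $dd^c$ and that $[\Gamma]$ is closed and positive, one obtains
\[
F_*(dd^c u)=dd^c\!\bigl(F^\dagger_\bullet u\bigr),\qquad F^\dagger_\bullet u(w):=\!\!\sum_{z\in F^\dagger(w)}\!\!u(z),
\]
the sum over the $d_0$ points of $F^\dagger(w)$ with multiplicity, and $\Lambda\omega_{FS}=\omega_{FS}+dd^c g$ with $g$ quasi-p.s.h.\ and in fact continuous, since $\Lambda\omega_{FS}$, being the pushforward of a smooth form by a finite correspondence, has density in some $L^p$, $p>1$. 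So, up to normalizing constants, $\Lambda$ acts on potentials by the affine rule $u\mapsto g+\tfrac1{d_1}F^\dagger_\bullet u$.

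For the existence of $\mu_F$ and convergence of $\tfrac1{d_1^n}(F^n)_*\omega_{FS}$, the key observation is that the linear part $\tfrac1{d_1}F^\dagger_\bullet$ is a \emph{contraction}, precisely because $F^\dagger_\bullet u$ is a sum of only $d_0<d_1$ branches of $u$: on bounded functions $\|\tfrac1{d_1}F^\dagger_\bullet v\|_\infty\le\tfrac{d_0}{d_1}\|v\|_\infty$, and in a suitable metric on probability measures $\Lambda$ becomes $\tfrac{d_0}{d_1}$-Lipschitz after finitely many iterates. Applying this to the potentials $u_n$ of $\Lambda^n\omega_{FS}$, which satisfy $u_{n+1}=g+\tfrac1{d_1}F^\dagger_\bullet u_n$ modulo constants, one gets $u_n\to u_\infty:=\sum_{k\ge 0}(\tfrac1{d_1}F^\dagger_\bullet)^k g$ in $L^1$; hence $\Lambda^n\omega_{FS}\to\mu_F:=\omega_{FS}+dd^c u_\infty$ weak-$*$, and $\mu_F$ is the unique $\Lambda$-fixed point, i.e.\ $F_*(\mu_F)=d_1\mu_F$.

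For equidistribution from a point $a\in\setRS$, write $\delta_a=\omega_{FS}+dd^c u_a$ with $u_a$ the normalized Green potential: quasi-p.s.h., $\le 0$, with a logarithmic pole at $a$. Unrolling the recursion, $\Lambda^n\delta_a=\omega_{FS}+dd^c(g_n+r_n)$ where $g_n\to u_\infty$ uniformly (the $a$-independent part) and $r_n:=(\tfrac1{d_1}F^\dagger_\bullet)^n u_a\le 0$. Then $\tfrac1{d_1^n}(F^n)_*\delta_a\to\mu_F$ provided $r_n-\bar r_n\to0$ in $L^1$, where $\bar r_n:=\int r_n\,\omega_{FS}$; and since $\|r_n-\bar r_n\|_{L^1}\le -2\bar r_n$, it suffices that $\bar r_n\to0$. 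By duality, $-\bar r_n=\tfrac1{d_1^n}\int(-u_a)\,d\nu_n$ with $\nu_n:=\bigl((F^\dagger)^n\bigr)_*\omega_{FS}$ of mass $d_0^n$; writing $\nu_n=d_0^n\omega_{FS}+dd^c p_n$ with $p_n$ normalized and integrating by parts against $dd^c u_a=\delta_a-\omega_{FS}$ (with the usual care about integrability of the unbounded potentials), one finds $-\bar r_n=\tfrac{d_0^n}{d_1^n}c_0-\psi_n(a)$ up to negligible terms, where $c_0>0$ is a constant and $\psi_n:=\tfrac1{d_1^n}p_n$. So the exceptional set is contained in $\{a:\psi_n(a)\not\to 0\}$.

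The hard part is to show this set is pluripolar (and proper). The point is that each $\psi_n$ is quasi-p.s.h.\ --- indeed $\tfrac{d_0^n}{d_1^n}\omega_{FS}$-p.s.h., since $dd^c p_n=\nu_n-d_0^n\omega_{FS}$ with $\nu_n\ge 0$ --- is bounded above by $O((d_0/d_1)^n)$, and has $\|\psi_n\|_{L^1}\le C(d_0/d_1)^n$, all summable because $d_0<d_1$. After subtracting the sup-bound so that $\psi_n\le 0$, the partial sums $\sum_{n\le N}\psi_n$ are then quasi-p.s.h.\ \emph{uniformly} in $N$ (their $dd^c$ is $\ge-\tfrac{d_1}{d_1-d_0}\omega_{FS}$), decreasing, and bounded in $L^1$, so $\Psi_\infty:=\sum_n\psi_n$ is a quasi-p.s.h.\ function in $L^1$. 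Hence $\{\Psi_\infty=-\infty\}$ is pluripolar and proper, and off it $\psi_n(a)\to0$, giving $\bar r_n\to0$ and $\tfrac1{d_1^n}(F^n)_*\delta_a\to\mu_F$; one takes $\mathcal E:=\{\Psi_\infty=-\infty\}$ (or its pluripolar hull). I expect the genuinely nontrivial inputs to be the pluripotential-theoretic facts --- relative compactness of normalized quasi-p.s.h.\ families, validity of integration by parts for unbounded potentials, and that a series of quasi-p.s.h.\ functions with summable $L^1$ norms has quasi-p.s.h.\ sum --- together with the sharp description of $\mathcal E$ as the largest backward-invariant pluripolar set, in the spirit of Brolin's and Lyubich's theorems (see \cref{th:misha}); the contraction mechanism itself is soft. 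Alternatively, the whole statement can be deduced from the super-potential equidistribution machinery of Dinh--Sibony in \cite{Dinh2020DynamicsOH}, of which it is a special case.
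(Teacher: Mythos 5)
A preliminary but important point: the paper does not prove \cref{dinhsibony} at all. It is stated in the preliminaries as quoted background, attributed to Dinh--Sibony \cite{Dinh2006DistributionDV} (see also \cite{Dinh2020DynamicsOH}), and in fact the paper never invokes it in its own arguments: \cref{thm:A} is proved instead by a Lyubich-style almost-periodicity argument, precisely because the author needs convergence for \emph{every} $a\in\bC$ rather than outside a pluripolar exceptional set. So there is no in-paper proof to compare you against; the relevant benchmark is the original potential-theoretic proof.

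Measured against that, your sketch is essentially a correct reconstruction of the $dd^c$-method specialized to $\setRS$. The load-bearing ideas are all present and in the right place: writing probability measures as $\omega_{FS}+dd^c u$, the identity $F_*(dd^c u)=dd^c(F^\dagger_\bullet u)$, and the observation that the linear part $\tfrac1{d_1}F^\dagger_\bullet$ contracts sup-norms by the factor $d_0/d_1$ because a fiber sum has only $d_0$ terms --- this is exactly where the hypothesis $d_0<d_1$ enters in Dinh--Sibony. The construction $\mu_F=\omega_{FS}+dd^c\sum_k(\tfrac1{d_1}F^\dagger_\bullet)^k g$, and the exceptional set as the $-\infty$ locus of a sum of quasi-p.s.h.\ functions $\psi_n$ with $\sup\psi_n$ and $\|\psi_n\|_{L^1}$ of order $(d_0/d_1)^n$, is also how pluripolarity of $\mathcal E$ is obtained in the literature. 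What would still need to be supplied in a complete write-up are precisely the technical inputs you flag, none of which is a wrong step but each of which requires proof: (i) $F^\dagger_\bullet u$ is quasi-p.s.h.\ (pull back to the normalization of $\Gamma$, push forward by the finite map $\pi_2$; well-definedness of $\pi_1^*(dd^c u_a)\wedge[\Gamma]$ uses that $\Gamma$ has no vertical or horizontal components); (ii) $\tfrac1{d_1}F_*\omega_{FS}$ has density in $L^p$, $p>1$ (local models $t\mapsto t^k$ at critical points of $\pi_2|_\Gamma$), so $g$ is bounded and the geometric series converges uniformly; (iii) the symmetry $\int u\,dd^c v=\int v\,dd^c u$ for unbounded quasi-p.s.h.\ potentials; (iv) the composition identity $((F^n)^\dagger)_\bullet=(F^\dagger_\bullet)^n$ with multiplicities. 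One small inaccuracy: your $c_0=\int(-u_a)\,\omega_{FS}$ depends on $a$ (it is constant only after choosing the rotation-invariant Green potential), but it is uniformly bounded, which is all the argument needs. With those caveats, the proposal is a faithful, if compressed, account of the known proof rather than a new route.
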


If $F$ is a holomorphic correspondence induced by the algebraic curve $\Gamma$, then it is \emph{weakly modular} if there is a positive measure $\mu$ on $\Gamma$ and probability measures $\mu_1,\mu_2$ on $\setRS$ such that $\mu=(\pi{_1}_{|\Gamma})^*\mu_1$ and $\mu=(\pi{_2}_{|\Gamma})^*\mu_2$. 

\begin{theorem}[\cite{Dinh2020DynamicsOH}]
Suppose that $F$ is a non-weakly modular correspondence and $2\leq d_0=d_1$. Then there are two measures $\mu_\pm $ such that $F^*(\mu_+)=d_1\mu_+$ and $F_*(\mu_-)=d_1\mu_-$ and if $\alpha$ is any smooth $(1,1)$-form on $\setRS$ with $\int_{\setRS} \alpha=1$, then
$$\frac{1}{d_1^n}(F^n)^*(\alpha)\xrightarrow{weak *}\mu_+\text { and  }\frac{1}{d_1^n}(F^n)_*(\alpha)\xrightarrow{weak *}\mu_-.$$
If no critical value of $F^\dagger$ is periodic, then there is a constant $0<\lambda<1$ such that for any $a\in \setRS$ and test function $\phi$ of class $C^\beta$ with $0<\beta\leq 1$, we have 

$$\left|\left\langle \frac{1}{d_1^{n}}(F^n)_*(\delta_a)-\mu_-,\phi\right\rangle\right|\leq A_\beta\|\phi\|_{C^\beta}\lambda^{\beta n}$$
where $A_\beta>0$ is a constant independent of $n,a$ and $\phi $.
\end{theorem}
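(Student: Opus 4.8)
The plan is to run the $dd^c$-method (the circle of ideas underlying \cref{dinhsibony}), exploiting that $H^{1,1}(\setRS;\bR)$ is one-dimensional. Write $\omega:=\omega_{FS}$. Since $\Gamma$ contains no vertical or horizontal line, $\tfrac1{d_1}F_*$ and $\tfrac1{d_0}F^{*}$ preserve the class $\{\omega\}$; as $d_0=d_1=:d$, the operators $\Lambda:=\tfrac1d F_*$ and $\Lambda^{\dagger}:=\tfrac1d F^{*}$ send probability measures to probability measures, and
\[
\Lambda(dd^c u)=dd^c(\mathcal M u),\qquad \Lambda^{\dagger}(dd^c u)=dd^c(\mathcal M^{\dagger}u),
\]
where $\mathcal M u(w)=\tfrac1d\sum_{z\in F^{\dagger}(w)}u(z)$ and $\mathcal M^{\dagger}u(z)=\tfrac1d\sum_{w\in F(z)}u(w)$ are the associated averaging (transfer) operators, satisfying $\mathcal M\mathbf 1=\mathcal M^{\dagger}\mathbf 1=1$, $\|\mathcal M^n u\|_{\infty}\le\|u\|_{\infty}$, and the duality $\langle\mathcal M u,\nu\rangle=\langle u,\Lambda^{\dagger}\nu\rangle$. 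I would establish the statements involving $\Lambda^{\dagger}$, $\mathcal M$ and $\mu_+$; those for $\mu_-$ follow by interchanging the roles of $F$ and $F^{\dagger}$.

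\textbf{Existence of $\mu_{\pm}$.} Write $\tfrac1d F^{*}\omega=\omega+dd^c g$ with $g$ a (continuous) quasi-p.s.h. potential, so that
\[
\tfrac1{d^n}(F^n)^{*}\omega=\omega+dd^c\Bigl({\textstyle\sum_{k=0}^{n-1}(\mathcal M^{\dagger})^k g}\Bigr).
\]
The partial sums need not converge --- this is exactly where $d_0=d_1$ departs from the case $d_0<d_1$ of \cref{dinhsibony}, where the ratio $d_0/d_1<1$ makes the transfer operator a genuine geometric contraction --- but the currents $\tfrac1{d^n}(F^n)^{*}\omega$ are probability measures, hence weak-$*$ relatively compact. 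By the standard continuity properties of $F^{*}$, every cluster value $\mu$ is a probability measure with $F^{*}\mu=d\mu$. Since replacing $\omega$ by any smooth $(1,1)$-form $\alpha=\omega+dd^c u$ with $\int_{\setRS}\alpha=1$ ($u$ smooth) changes $\tfrac1{d^n}(F^n)^{*}\omega$ only by $dd^c\bigl((\mathcal M^{\dagger})^n u\bigr)$, which stays uniformly bounded, the sequences $\tfrac1{d^n}(F^n)^{*}\alpha$ and $\tfrac1{d^n}(F^n)^{*}\omega$ have the same cluster set.

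\textbf{Uniqueness, hence equidistribution.} Let $\mu',\mu''$ be cluster values. Their difference is $dd^c h$ for a bounded difference $h$ of quasi-p.s.h. functions (cluster values of $\tfrac1{d^n}(F^n)^{*}\omega$ have bounded potentials), and $\Lambda^{\dagger}$-invariance of $\mu',\mu''$ forces $\mathcal M^{\dagger}h=h$. The heart of the matter --- and, I expect, one of the two main obstacles --- is the lemma that $\mathcal M^{\dagger}$ has no non-constant bounded fixed function. This is precisely where non-weak-modularity is needed: a non-constant such $h$ would, via $\mathcal M^{\dagger}h=h$ together with the dual relation for $\mathcal M$, produce a positive measure on $\Gamma$ realizing compatible marginals on the two factors, i.e. a weakly modular structure (morally, the rigidity present in modular/Hecke correspondences), contradicting the hypothesis. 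Granting this, $h$ is constant, $\mu'=\mu''=:\mu_+$, and $\tfrac1{d^n}(F^n)^{*}\alpha\xrightarrow{weak *}\mu_+$; symmetrically $\tfrac1{d^n}(F^n)_*\alpha\xrightarrow{weak *}\mu_-$.

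\textbf{Exponential mixing.} For $a\in\setRS$ pick a potential $u_a$ of the mass-zero signed measure $\delta_a-\mu_-$ (so $dd^c u_a=\delta_a-\mu_-$, with a logarithmic pole at $a$). Since $F_*\mu_-=d\mu_-$ \emph{exactly}, one gets the clean identity
\[
\tfrac1{d^n}(F^n)_*\delta_a-\mu_-=dd^c\bigl(\mathcal M^n u_a\bigr),
\]
whence $\bigl\langle\tfrac1{d^n}(F^n)_*\delta_a-\mu_-,\phi\bigr\rangle=\langle\mathcal M^n u_a,dd^c\phi\rangle$ for $\phi\in C^{\beta}$. It therefore suffices to show that $\mathcal M^n u_a$ converges to the constant $\langle\mu_+,u_a\rangle$ fast enough to absorb the two derivatives lost in $dd^c\phi$; concretely I would establish (i) a geometric $L^1$-decay $\lVert\mathcal M^n u_a-\langle\mu_+,u_a\rangle\rVert_{L^1}\lesssim\lambda^{n}$ (which already yields $\tfrac1{d^n}(F^n)_*\delta_a\to\mu_-$ for every $a$) together with (ii) a uniform bound for $\mathcal M^n u_a$ in a stronger, DSH-type norm, and then interpolate between them to obtain the rate $\lambda^{\beta n}$ against $C^{\beta}$ test functions, with constant $A_{\beta}$ uniform in $a$ and $\phi$. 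Estimate (i) is a spectral gap for the transfer operator on the relevant function space, and this --- the second main obstacle --- is where the hypothesis that no critical value of $F^{\dagger}$ is periodic enters: it plays the role of the Mañé--Dinh--Dupont condition, ruling out the postcritical set of $F^{\dagger}$ accumulating onto $\supp\mu_-$ in a way that would spoil the contraction of $\mathcal M$ near branch points. The potential-theoretic distortion estimates for the local branches of $(F^{\dagger})^n$ then proceed as in the classical (single-map) equidistribution arguments behind \cref{th:misha}.
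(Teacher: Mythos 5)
This statement is quoted background: the paper gives no proof of it, citing \cite{Dinh2020DynamicsOH}, so there is no internal argument to compare against; the question is whether your sketch would stand on its own, and it does not yet. Your overall architecture (the $dd^c$/transfer-operator scheme, the role of $d_0=d_1$, the reduction of uniqueness to invariant functions, and the interpolation strategy for the $C^\beta$ estimate) is the right one and matches the spirit of the original reference, but the two steps you yourself flag as ``the main obstacles'' are exactly the content of the theorem, and you leave both unproved. First, the uniqueness step: you assert that every cluster value of $\tfrac1{d^n}(F^n)^*\omega$ has bounded potential and that a non-constant bounded $\mathcal M^\dagger$-invariant $h$ would ``produce'' a weakly modular structure. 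Neither claim is justified. For a correspondence the potential $g$ of $\tfrac1d F^*\omega-\omega$ is only quasi-p.s.h.\ (it is typically unbounded at the critical values), and since $d_0=d_1$ there is no geometric factor making $\sum_k(\mathcal M^\dagger)^k g$ bounded, so ``cluster values have bounded potentials'' needs a genuine argument; likewise the passage from a bounded invariant $h$ to a positive measure on $\Gamma$ with equal pullback marginals is precisely the rigidity statement where non-weak-modularity is consumed, and ``morally, as for modular/Hecke correspondences'' is not a proof. (Also, invariance $F^*\mu=d\mu$ of an arbitrary cluster value, and the step from $dd^c(\mathcal M^\dagger h-h)=0$ to $\mathcal M^\dagger h=h$ rather than $h+c$, each need a short argument you omit.)

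Second, the exponential mixing part is a program, not a proof: items (i) and (ii) --- the spectral-gap/$L^1$-contraction of $\mathcal M$ and the uniform DSH-type bound for $\mathcal M^n u_a$ --- are stated as goals, and the hypothesis that no critical value of $F^\dagger$ is periodic is only described as ``playing the role of'' a known condition, with no mechanism given for how it yields contraction with a rate $\lambda$ and a constant $A_\beta$ uniform in $a$. Uniformity in $a$ is not innocent: $u_a$ has a logarithmic pole at $a$, and $a$ may sit on or near the postcritical set or $\supp\mu_-$, so one must control $\mathcal M^n u_a$ in a norm that tolerates such poles uniformly; this is where the real work of \cite{Dinh2020DynamicsOH} lies. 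As written, the proposal correctly identifies the skeleton of the argument but defers its two load-bearing lemmas, so it cannot be accepted as a proof of the quoted theorem.
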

Of course, if no critical value of $F$ is periodic, then the analogous statement for $\mu_+$ holds.
To state the next result, we need a few notions. The $\omega$-limit set of a set $S$, $\omega(S)$ may be defined as 
\[\bigcap \{K: K \text{ is a closed forward invariant set such that } F^n(S)\subset K \text{ for some } n\geq 0\}.\]
 
Next, we say that $\mathcal A\subset \setRS$ is an \emph{attractor} if there is an open set $U$ of $\mathcal A$ such that $\omega(U)=\mathcal A$.
Moreover, we say that $\mathcal A$ is a \emph{strong attractor} if it is an attractor and if there is a point $a_0\in \mathcal A$  such that for any $u_0\in U$ there is a sequence $(u_n)$ such that $u_n\in F^n(u_0)$ and $u_n\to a_0$ as $n\to \infty$.

\begin{theorem}[\cite{Bharali2016TheDO}]\label{th:barali}
Suppose that $F$ is such that $d_0\leq d_1$, $F$ has a strong attractor $\mathcal A$ that is disjoint from the set of critical values of $F^\dagger$. Then there is a probability measure $\mu$ and an open set $U\supset \mathcal A$ such that $F_*(\mu)=d_1 \mu $ such that 

$$\frac{1}{d_1^n}(F^n)_*\delta_x  \xrightarrow{weak *} \mu \quad \text{as $n\to \infty$},\quad  \forall x\in U.$$

\end{theorem}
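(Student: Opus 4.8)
The plan is to use the fact that near a strong attractor the correspondence $F$ possesses single-valued holomorphic branches which contract a hyperbolic metric, so that computing the limit of $\tfrac1{d_1^n}(F^n)_*\delta_x$ becomes an exercise about a contracting iterated function system (IFS) and Hutchinson's theorem.

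\emph{Step 1: a trapping neighborhood.} First I would produce an open set $V$ with $\mathcal A\subset V$, with $\overline V$ disjoint from the critical values of $F^\dagger$, with $\overline{F(V)}\subset V$, and with $\bigcap_{n\ge 1}F^n(V)=\mathcal A$; discarding the case $\mathcal A=\setRS$ (which must be treated separately) and shrinking $V$ so that $\setRS\setminus V$ has nonempty interior, each component of $V$ is hyperbolic and carries a complete Poincar\'e metric $\rho_V$, comparable to the Euclidean one on compact subsets of $V$. This is where the strong-attractor hypothesis does its work: the $\omega$-limit notion recalled above does not immediately yield a trapping neighborhood, and producing one, with $\bigcap_n F^n(V)=\mathcal A$, is the technical heart of the matter. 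If $V$ cannot be taken with simply connected components one argues below with the transfer operator $\phi\mapsto\tfrac1{d_1}\sum_{y\in F(\cdot)}\phi(y)$ and the \emph{local} branches; I describe the simply connected case.

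\emph{Step 2: contracting IFS, Hutchinson, and conclusion.} Over $V$ the projection $\pi_1|_\Gamma$ is an unramified $d_1$-sheeted cover, so $F|_V$ splits into holomorphic branches $f_1,\dots,f_{d_1}\colon V\to\setRS$; forward invariance gives $f_i(V)\subset F(V)\subset W_0$ for a fixed $W_0$ with $\overline{W_0}\subset V$. Applying Schwarz--Pick to $f_i\colon V\to W_0$ together with the strict inequality $\rho_V\le(1+\eps)^{-1}\rho_{W_0}$ on $W_0$ (valid since $W_0\Subset V$) gives a single constant $c=(1+\eps)^{-1}<1$ with $f_i^*\rho_V\le c\,\rho_V$ on $V$ for every $i$; thus $\{f_1,\dots,f_{d_1}\}$ is a uniformly contracting IFS, and on the compact (hence complete) space $K:=\overline{F(V)}$, which each $f_i$ maps into itself, Hutchinson's theorem in measure form applies: the operator $M\nu=\tfrac1{d_1}\sum_i(f_i)_*\nu$ is a $c$-contraction of the Wasserstein distance on $\mathcal P(K)$ (Kantorovich--Rubinstein), so it has a unique fixed point $\mu$, and $M^m\nu_0\to\mu$ weak-$*$ for every $\nu_0\in\mathcal P(K)$, weak-$*$ convergence being metrized by that distance on the compact $K$. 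For $x\in U:=V$ we have $\tfrac1{d_1}F_*\delta_x=\tfrac1{d_1}\sum_i\delta_{f_i(x)}\in\mathcal P(K)$, and, since $F_*=\sum_i(f_i)_*$ on measures supported in $V$ and $(F^m)_*=(F_*)^m$ on such measures, $\tfrac1{d_1^m}(F^m)_*\delta_x=M^{m-1}\!\big(\tfrac1{d_1}F_*\delta_x\big)\to\mu$ — the asserted equidistribution. Finally, $\operatorname{supp}\mu\subset\bigcap_n F^n(K)\subset\mathcal A$ and $F(\mathcal A)\subset\mathcal A$, so $F_*\mu$ is supported near $\mathcal A$, where $F_*=\sum_i(f_i)_*$; hence $M\mu=\mu$ globalizes to $F_*(\mu)=d_1\mu$ on $\setRS$.

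The main obstacle is Step 1 — extracting a trapping neighborhood with $\bigcap_n F^n(V)=\mathcal A$ from the attractor hypothesis as phrased, and ruling out or handling $\mathcal A=\setRS$; once this is in place, the remainder is mechanical. When $\mathcal A$ is not totally disconnected the images $f_I(K)$ ($|I|=n$) need not shrink and the IFS may fail to be contracting; in that generality one argues instead through the mixing properties of correspondences in the spirit of \cref{dinhsibony} on the basin of $\mathcal A$ — equivalently, writing $\tfrac1{d_1}F_*\omega_{FS}-\omega_{FS}=dd^{c}u_0$ and showing that $\sum_{k\ge0}\tfrac1{d_1^k}(F^k)_*u_0$ converges there, which is where the hypothesis $d_0\le d_1$ enters.
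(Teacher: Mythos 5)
First, note that the paper does not prove \cref{th:barali} at all: it is quoted from \cite{Bharali2016TheDO} as background, and the author's own remarks in \cref{sec:proofsa} indicate that the proof in that source (like the proof of \cref{thm:A} given here) follows Lyubich's scheme — almost periodicity of the transfer operator $A_K\phi=\frac1{d_1}\sum_{\zeta\in F(\cdot)}\phi(\zeta)$ via normal families of branch compositions, triviality of the unitary spectrum via the distinguished point $a_0$ of the strong attractor, and then \cref{prop:misha1}. Your Hutchinson/Wasserstein contraction route is therefore genuinely different from the cited proof; if it worked it would be a clean and arguably more elementary argument, but as written it has a real gap.

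The gap is exactly your Step 1, which you acknowledge but do not fill: nothing in the $\omega$-limit definition of (strong) attractor used in this paper obviously yields an open $V\supset\mathcal A$ with $\overline{F(V)}\subset V$, $\bigcap_n F^n(V)=\mathcal A$, hyperbolic components, and no critical values of $F^\dagger$ nearby; yet every hypothesis of the theorem other than the critical-value condition (the strength of the attractor, i.e.\ the point $a_0$, and $d_0\le d_1$) is used nowhere else in your argument, so all of the theorem's actual content has been displaced into this unproved step. Once such a trapping neighborhood is granted, your Step 2 is essentially a standard IFS argument, but two of its points are also only waved at: the splitting of $F|_V$ into $d_1$ global single-valued branches needs simply connected components (an unramified cover over a multiply connected $V$ need not admit sections), and the promised ``local branch'' variant of the Kantorovich--Rubinstein contraction is never carried out (it can be done by continuing branches along paths in $V$ to pair the points of $F(x)$ and $F(y)$, but that needs to be said). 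Finally, your closing caveat misdiagnoses the difficulty: if Step 1 holds, the Schwarz--Pick contraction is uniform irrespective of the topology of $\mathcal A$ (attractors of contracting IFSs can perfectly well be connected), so the problem is not ``non-totally-disconnected $\mathcal A$'', and the suggested fallback ``argue via mixing as in \cref{dinhsibony}'' is not an argument — \cref{dinhsibony} requires $d_0<d_1$ and only gives convergence off an exceptional pluripolar set, not on a full neighborhood of $\mathcal A$.
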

One should note that the definitions of \emph{critical point} in \cite{Dinh2020DynamicsOH} and \cite{Bharali2016TheDO} are slightly different and we refer the reader to the two papers for the precise definitions. In \cite{Bharali2016TheDO} they also show that the support of the measure $\mu_F$ in \cref{dinhsibony} is disjoint from the \emph{normality set} of $F^\dagger$, see \cite{Bharali2016TheDO} for the of the definition normality set. Lastly, we state a theorem relating minimal invariant sets and the equidistribution of images.

\begin{theorem}[\cite{mayuresh}]\label{th:mayuresh}
Suppose that $\mu_F$ is the measure appearing in \cref{dinhsibony}. Then $\supp(\mu_F)$ is the minimal closed non-pluripolar forward invariant set.

\end{theorem}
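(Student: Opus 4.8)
The plan is to verify, one property at a time, that $\supp\mu_F$ is closed, non-pluripolar and forward invariant, and then to show that any set enjoying all three properties must contain it; since \cref{dinhsibony} hands us the measure $\mu_F$ together with the invariance $F_*\mu_F=d_1\mu_F$ and the equidistribution statement, both halves should reduce to soft arguments, the existence of a \emph{minimal} such set being a byproduct.

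For the first half, closedness of $\supp\mu_F$ is automatic. For non-pluripolarity I would invoke the fact that the Green measure $\mu_F$ of a correspondence with $1\le d_0<d_1$ charges no pluripolar set (this is part of the Dinh--Sibony theory, cf.\ \cite{Dinh2006DistributionDV,Dinh2020DynamicsOH}); then $\mu_F(\supp\mu_F)=1$ rules out $\supp\mu_F$ being pluripolar. For forward invariance I would first establish the general inclusion $F(\supp\nu)\subseteq\supp(F_*\nu)$, valid for any positive measure $\nu$: if $z_0\in\supp\nu$, $w_0\in F(z_0)$ and $V\ni w_0$ is open, then because the local branches of the correspondence vary continuously (the defining curve $\Gamma$ is compact and $\pi_1|_\Gamma$ is a finite branched covering), every $z$ in a small neighbourhood $U$ of $z_0$ has an image point inside $V$, and the definition $F_*\nu=(\pi_2)_*(\pi_1^*\nu\wedge[\Gamma])$ then forces $F_*\nu(V)\ge\nu(U)>0$, i.e.\ $w_0\in\supp(F_*\nu)$. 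Applying this with $\nu=\mu_F$ and using $F_*\mu_F=d_1\mu_F$ yields $F(\supp\mu_F)\subseteq\supp\mu_F$.

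For the minimality half, let $S$ be any closed non-pluripolar set with $F(S)\subseteq S$. The exceptional set $\mathcal E$ of \cref{dinhsibony} is pluripolar, so $S\not\subseteq\mathcal E$ and I can pick $a\in S\setminus\mathcal E$. Iterating forward invariance gives $F^n(a)\subseteq S$ for every $n$, so each probability measure $\tfrac1{d_1^n}(F^n)_*\delta_a$ is supported on the closed set $S$; by \cref{dinhsibony} these converge weak-$*$ to $\mu_F$, and a weak-$*$ limit of measures supported on a fixed closed set is again supported on it, whence $\supp\mu_F\subseteq S$. Together with the first half this identifies $\supp\mu_F$ as the minimal closed non-pluripolar forward invariant set.

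The one genuinely non-elementary ingredient is the assertion that $\mu_F$ charges no pluripolar set, on which the non-pluripolarity of $\supp\mu_F$ hinges; this is where the finer pluripotential theory of the Green measure enters (e.g.\ its being a moderate/PB measure with a sufficiently regular potential), and I would import it from \cite{Dinh2006DistributionDV,Dinh2020DynamicsOH}. The forward-invariance step is conceptually routine but needs a little care at branch points and around the current-theoretic meaning of $F_*$; the remainder of the minimality argument is entirely soft once \cref{dinhsibony} is granted.
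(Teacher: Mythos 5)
Your argument is correct, and there is nothing in the paper to compare it against: \cref{th:mayuresh} is stated here only as a quoted result from \cite{mayuresh}, with no proof given. Your route (closedness trivially; non-pluripolarity because the Dinh--Sibony measure for $d_0<d_1$ integrates quasi-p.s.h.\ functions and hence charges no pluripolar set, so $\mu_F(\supp\mu_F)=1$ forbids pluripolarity of the support; forward invariance via $F(\supp\nu)\subseteq\supp(F_*\nu)$ applied to $F_*\mu_F=d_1\mu_F$; minimality by picking $a\in S\setminus\mathcal E$, which exists since $\mathcal E$ is pluripolar and $S$ is not, and passing the weak-$*$ limit of $\tfrac1{d_1^n}(F^n)_*\delta_a$ through the closed set $S$) is the standard proof and, as far as I can tell, essentially the argument of the cited source; the one genuinely non-elementary input, that $\mu_F$ gives no mass to pluripolar sets, is correctly attributed to the Dinh--Sibony theory and is the same fact that makes the exceptional set $\mathcal E$ pluripolar in \cref{dinhsibony}.
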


\section{Proof of \cref{thm:A}}\label{sec:proofsa}
On any compact set $K$ such that $F_\delta(K)\subset K$ we define the operator $A_K:C(K)\to C(K)$ as

$$A_K(\phi)(\xi):=\frac{1}{d}\langle F_*\delta_a,\phi\rangle=\frac{1}{d}\sum_{\zeta\in F_\delta(\xi)} \phi(\zeta),$$
where we take multiplicities into account and $d$ is the degree of $F_\delta$. For $K=\setRS$, we denote $A_{\setRS}$ by $A:C(\setRS)\to C(\setRS)$, i.e.	

$$A(\phi)(\xi)=\frac{1}{d}\sum_{\zeta\in F_\delta(\xi)} \phi(\zeta),$$
where we once again take multiplicities into account. We say that $A_K$ is \emph{almost periodic} if for any $\phi\in C(K)$, we have that $\{A_K^m\phi\}_{m=1}^\infty$  is strongly conditionally compact. In particular, if for any $\phi\in C(K)$, we have that $\{A_K^m \phi \}_{m=1}^\infty$ is an equicontinuous family, then it is strongly conditionally compact.

We will need the following statements from \cite{ljubich_1983}.

\begin{proposition}[\cite{ljubich_1983}]\label{prop:misha1}
Suppose that $A: \mathfrak B\to \mathfrak B$ is an almost periodic operator on a complex Banach space $\mathfrak B$ and that its unitary spectrum is $\{1\}$ and $1$ is a simple eigenvalue. Suppose that $h\neq 0$ is an invariant vector of $A$. Then there is a linear functional $\mu$ such that $\mu\circ A=\mu, $ $\mu(h)=1$ and $$A^n(v)\to \mu(v) h, \quad \text{ as } n\to \infty,$$ for each $v\in \mathfrak B$.
\end{proposition}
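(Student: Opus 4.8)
The plan is to obtain the functional $\mu$ from a mean ergodic splitting of $\mathfrak B$, and then to kill the complementary (``transient'') part of every vector in the limit, using the spectral hypothesis together with the relative compactness of orbits provided by almost periodicity.

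First I would record that $A$ is power-bounded: by almost periodicity each orbit $\{A^m v\}_{m\ge 0}$ is relatively compact, hence bounded, so the uniform boundedness principle gives $M:=\sup_m\|A^m\|<\infty$. Since $A$ is power-bounded with relatively norm-compact orbits, the mean ergodic theorem applies: the averages $\frac1N\sum_{m=0}^{N-1}A^m v$ converge for every $v$ to $Pv$, where $P$ is a bounded projection with $\operatorname{ran}P=\ker(I-A)$, $\ker P=\overline{\operatorname{ran}(I-A)}=:\mathfrak N$, and $PA=AP=P$. Because $\ker(I-A)=\bC h$ is one-dimensional and $Ph=h$, the projection has the form $Pv=\mu(v)h$ for a unique bounded linear functional $\mu$, and $Ph=h$ forces $\mu(h)=1$; the relation $PA=P$ then reads $\mu(Av)h=\mu(v)h$, i.e. $\mu\circ A=\mu$. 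This settles the three claimed properties of $\mu$.

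It remains to prove $A^n v\to\mu(v)h$ for every $v$. Writing $v=\mu(v)h+v_0$ with $v_0:=v-\mu(v)h\in\ker\mu=\mathfrak N$ and using $Ah=h$, one gets $A^n v=\mu(v)h+A^n v_0$, so everything reduces to showing $A_0^n v_0\to 0$ for $A_0:=A|_{\mathfrak N}$ and every $v_0\in\mathfrak N$. Here $A_0$ is again power-bounded with relatively compact orbits; the $A$-invariant decomposition $\mathfrak B=\bC h\oplus\mathfrak N$ reduces $A$, so $\sigma(A)=\{1\}\cup\sigma(A_0)$ and hence $\sigma(A_0)\cap\{|\lambda|=1\}\subseteq\{1\}$; and $A_0$ has no nonzero fixed vector, because $\ker(I-A_0)=\ker(I-A)\cap\mathfrak N=\operatorname{ran}P\cap\ker P=\{0\}$. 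Now I would invoke the Katznelson--Tzafriri theorem: a power-bounded operator $T$ with $\sigma(T)\cap\{|\lambda|=1\}\subseteq\{1\}$ satisfies $\|T^{n+1}-T^n\|\to 0$. Applied to $A_0$ this gives $\|A_0^{n+1}-A_0^n\|\to 0$, so for fixed $v_0$ the sequence $x_n:=A_0^n v_0$ is relatively compact with $\|x_{n+1}-x_n\|\le\|A_0^{n+1}-A_0^n\|\,\|v_0\|\to 0$. If $x_{n_k}\to w$ with $n_k\to\infty$, then $A_0 w=\lim_k A_0x_{n_k}=\lim_k x_{n_k+1}=w$, so $w\in\ker(I-A_0)=\{0\}$; thus $0$ is the only cluster point of the relatively compact sequence $(x_n)$, whence $x_n\to 0$. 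Combining with the reduction, $A^n v\to\mu(v)h$ for all $v$.

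The step I expect to be the main obstacle is the convergence on $\mathfrak N$, i.e. passing from the peripheral spectral condition to an asymptotic statement: the Katznelson--Tzafriri input is doing the real work here, and in a self-contained treatment in the spirit of \cite{ljubich_1983} one would instead analyze the compact semigroup $\overline{\{A^n\}}$ in the strong operator topology and show its minimal ideal is the rank-one projection $v\mapsto\mu(v)h$. The mean ergodic theorem is the only other nontrivial ingredient, and there one must be a little careful to exploit the relative compactness of orbits rather than reflexivity of $\mathfrak B$. (If ``simple eigenvalue'' is meant in the stronger sense that $1$ is also isolated in $\sigma(A)$, the argument shortcuts: the Riesz projection for $\{1\}$ is rank one and coincides with $P$, and then $\operatorname{spr}(A_0)<1$, so $\|A_0^n\|\to 0$ directly; but then almost periodicity plays no role, which suggests the geometric reading above is the intended one.)
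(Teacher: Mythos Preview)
The paper does not give its own proof of this proposition; it is quoted from \cite{ljubich_1983} and used as a black box. Your argument is correct under the reading ``unitary spectrum'' $=\sigma(A)\cap\{|\lambda|=1\}$, and your closing paragraph already pinpoints how it differs from Lyubich's: you reach $\|A_0^{n+1}-A_0^n\|\to 0$ via the Katznelson--Tzafriri theorem (1986), whereas Lyubich, writing three years earlier, argues through the compact abelian semigroup $\mathcal S=\overline{\{A^n\}}$ in the strong operator topology, shows that its minimal ideal collapses to the single projection $e\colon v\mapsto\mu(v)h$, and uses that in a monothetic compact semigroup the $\omega$-limit set of the generator is exactly the minimal ideal to conclude $A^n\to e$ strongly. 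Both routes manufacture $\mu$ from the same mean ergodic splitting, and your use of norm-compact orbits in place of reflexivity is the right move there. Your endgame is shorter once KT is granted, at the price of a citation that is anachronistic relative to the source.

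One caveat worth making explicit: if ``unitary spectrum'' is read as the unimodular \emph{point} spectrum only, your inclusion $\sigma(A_0)\cap\{|\lambda|=1\}\subseteq\{1\}$ need not hold even for almost periodic $A$. Take $A=\mathrm{id}_{\bC}\oplus S$ with $S$ the backward shift on $c_0$: every orbit is relatively norm-compact and $1$ is the sole unimodular eigenvalue (simple), yet $\sigma(S)$ is the closed unit disk and $\|S^{n+1}-S^n\|\ge 1$ for all $n$, so the KT conclusion fails on $\mathfrak N$ while $A^n\to e$ still holds. The compact-semigroup argument you sketch at the end is insensitive to this distinction and goes through verbatim; your KT-based argument is not. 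So under the weaker reading of the hypothesis it is your parenthetical alternative, not the main line, that actually closes the proof.
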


We denote by $D(z,r)$ the closed disk in $\bC$ of radius $r$  centered at $z$ and $d(\cdot,\cdot)$ denotes the spherical metric on $\setRS$. $C(K)$ is the space of continuous functions on $K$ with image in $\bC$. It is a Banach space with norm $\|\phi\|_K=\sup_{z\in K}|\phi(z)|$.

For a fixed degree $d$ polynomial $R_0(z)$ with only simple zeros and fixed $$P(z,w)=\sum_{j=0}^{d}P_j(w)(w-z)^{d-j},$$
with polynomials $P_j(w)$ such that $\deg P_j(w)\leq j$ of degree at most $d$ we shall denote by $R_\beta$ the polynomial

 \begin{equation}\label{eq:def1}R_\beta(z,w)=R_0+\sum_{j=0}^{d}\beta_jP_j(w)(w-z)^{d-j}\end{equation}
where $\beta=(\beta_0,\beta_1,...,\beta_{d}).$ If we can prove the properties in \cref{thm:A} for the correspondences defined by $R_\beta(z,w)$ for $\beta_j$ of sufficiently small absolute value, we are done.

Now, we define $S(w):=R_0(w)+\beta_{d}P_{d}(w)$ and $a_k$ as the coefficient of $w^{k}$ in $S(w)$. We find $\beta_{d,0}$ such that if $|\beta_d|<\beta_{d,0}$ then $S(w)$ has only simple zeros, these zeros are of uniformly positive distance and $|a_d|$ is uniformly positive. As $R_0(w)$ has only simple zeros and $\deg P_{d}(w)\leq \deg R_0(w)=d$, this is possible. 
We start by proving the following lemma.
\begin{lemma}\label{le:1}
Let $R_\beta(z,w)$ be as in \cref{eq:def1}. There exist $\beta_{j,0}>0, M_0>0$, such that if $M\geq M_0, |\beta_j|< \beta_{j,0}$ and $z\notin D(0,M)$, then for any point $w\in F_\delta(z)$, we have $|w|<|z|/2$.
\end{lemma}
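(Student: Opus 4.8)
The plan is to analyze the behavior of the roots $w$ of $R_\beta(z,w)=0$ in the variable $w$ as $z\to\infty$, and show that they are forced to be of size $o(|z|)$. Write $R_\beta(z,w) = R_0(w) + \sum_{j=0}^d \beta_j P_j(w)(w-z)^{d-j}$. The key structural observation is that the term $j=d$, namely $R_0(w)+\beta_d P_d(w) = S(w)$, is the only term with no factor of $(w-z)$; all other terms carry a factor $(w-z)^{d-j}$ with $d-j\geq 1$. Since $\deg P_j \leq j$, the degree in $w$ of the $j$-th term is at most $j + (d-j) = d$, so $\deg_w R_\beta \leq d$ for all $\beta$, and the leading coefficient of $w^d$ is $a_d + \sum_{j<d}\beta_j(\text{coeff})$, which is uniformly bounded away from $0$ for all $|\beta_j|$ small (here I use that $|a_d|$ is uniformly positive, as arranged before the lemma). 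So $F_\delta(z)$ consists of exactly $d$ points counted with multiplicity, for $z$ large.

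First I would rescale: fix $z$ with $|z|=r$ large, and suppose for contradiction that there is a root $w$ with $|w|\geq r/2$. Set $w = z\zeta$, so $|\zeta|\geq 1/2$. Substituting and dividing by $z^d$, the equation $R_\beta(z,z\zeta)=0$ becomes
\[
\frac{R_0(z\zeta)}{z^d} + \sum_{j=0}^d \beta_j \frac{P_j(z\zeta)}{z^{j}}\,(\zeta-1)^{d-j} = 0.
\]
As $|z|\to\infty$, the first term tends to $c_0\zeta^d$ where $c_0$ is the leading coefficient of $R_0$ (uniformly on $|\zeta|$ bounded), and each term in the sum tends to $\beta_j b_j \zeta^{j}(\zeta-1)^{d-j}$ where $b_j$ is the leading coefficient of $P_j$ (with $b_j=0$ if $\deg P_j<j$); lower-order-in-$z$ contributions vanish. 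Thus any limit point $\zeta_\infty$ of such roots satisfies the limiting polynomial equation
\[
c_0 \zeta^d + \sum_{j=0}^d \beta_j b_j \zeta^{j}(\zeta-1)^{d-j} = 0.
\]
At $\beta=0$ this reads $c_0\zeta^d=0$, whose only root is $\zeta=0$. By continuity of roots of polynomials in their coefficients — and since the leading coefficient $c_0 + \beta_d b_d$ stays bounded away from $0$ — for $|\beta_j|$ small enough every root of the limiting polynomial lies in the disk $\{|\zeta|<1/2\}$. This contradicts $|\zeta_\infty|\geq 1/2$, and a standard compactness/normal-families argument converts the "limit point" reasoning into the uniform statement: there exist $M_0$ and thresholds $\beta_{j,0}$ such that for $|z|\geq M_0$ and $|\beta_j|<\beta_{j,0}$, every root $w\in F_\delta(z)$ satisfies $|w|<|z|/2$.

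The main obstacle is making the passage from "asymptotic as $|z|\to\infty$" to a genuinely uniform bound valid for all $|z|\geq M_0$ with a single choice of $M_0$; one must be careful that the implied constants in the estimates $R_0(z\zeta)/z^d \to c_0\zeta^d$ etc. are uniform for $\zeta$ in a fixed compact set (here $|\zeta|$ in a bounded range, which is automatic once we also rule out $|w|$ comparable to but much larger than $|z|$ — but that case is handled identically since $|\zeta|$ large only makes the dominant term $c_0\zeta^d$ larger relative to the rest). A clean way to avoid any fuss is to argue by contradiction at the level of sequences: if the lemma failed, there would be $\beta^{(k)}\to 0$ (or with $|\beta^{(k)}|$ below any prescribed threshold), $z_k\to\infty$, and roots $w_k$ with $|w_k|\geq |z_k|/2$; passing to a subsequence so that $w_k/z_k\to\zeta_\infty\in\hat{\bC}$ with $|\zeta_\infty|\geq 1/2$ (allowing $\zeta_\infty=\infty$), the displayed rescaled identities force $c_0\zeta_\infty^d=0$, hence $\zeta_\infty=0$, a contradiction. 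This handles the degenerate "$\delta P$ constant in $z$" case too, since then the roots $w\in F_\delta(z)$ are the fixed zeros $u_j$ of $S(w)$, independent of $z$, so $|w|<|z|/2$ trivially once $|z|>2\max_j|u_j|$.
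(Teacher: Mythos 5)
Your proposal is correct and takes essentially the same route as the paper: both arguments rest on the observation that for $|z|$ large and $|w|\ge |z|/2$ the dominant term $S(w)\asymp |w|^{d}$ cannot be balanced by the terms carrying the small parameters $\beta_j$, the paper phrasing this as an explicit inequality in $\bigl|\tfrac{w-z}{w}\bigr|$ while you phrase it via the rescaling $w=z\zeta$ together with a continuity-of-roots/compactness argument (including the $\zeta_\infty=\infty$ case). One cosmetic slip: the leading coefficient of your limiting polynomial in $\zeta$ is $c_0+\sum_{j}\beta_j b_j$ rather than $c_0+\beta_d b_d$, which changes nothing in the argument.
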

\begin{remark}
\cref{le:1} implies that for any $v_0\neq \infty$, there exists some finite $m$, (depending on $v_0$) such that $(F_\delta)^m(v_0)\subset D(0,M)$ provided $\delta$ is small enough.
\end{remark}
\begin{proof}[Proof of \cref{le:1}] 
The equation we are studying is
\begin{equation}\label{eq:simplifying}S(w)+\sum_{j=0}^{d-1}\beta_j P_j(w)(w-z)^{d-j}=0.\end{equation} 

For fixed $P_j(z)$, $j<d$ and $S$, by investigating the leading terms of \eqref{eq:simplifying}, we get that there exist $M_1>0$ and $C>0$ both independent of all $\beta_j$ such that if and $|z|\geq M_1$ and $(z,w)$ are solutions to \eqref{eq:simplifying}, then 

\begin{equation}\label{eq:simplifying2}1\leq C \sum_{j=0}^{d-1}|\beta_j| \left|\frac{w-z}{w}\right|^{d-j}\frac{1}{|w|^{d_j}}\end{equation}
where $d_j\geq 0$ equals $j-\deg Q_j\geq 0$. Note that we use uniform positivity of $|a_d|$ here.

Take $M\geq M_1$ and suppose now that $z\notin D(0,M)$ with $z\neq \infty$ such that $w\geq|z|/2$ and $w\in F_\delta(z)$. Then by \eqref{eq:simplifying2}, 

$$1\leq C \sum_{j=0}^{d-1}|\beta_j| \left|\frac{w-z}{w}\right|^{d-j}\frac{1}{|w|^{d_j}}\leq C  \sum_{j=0}^{d-1}\left|\beta_j\right| 3^{d-j},$$
This is clearly false for $\beta_j$, $j=0,...,d-1$ of sufficiently small absolute value. In particular there is  $\beta_{j,0}>0$ such that if $|\beta_j|<\beta_{j,0}$ for all $j$, the statement is false. This argument implies the statement with the mentioned $\beta_{j,0}$ ($\beta_{d,0}$ was defined before the statement of the lemma) and $M_0=2M_1$.
\end{proof}

\begin{remark}
Note that $M_0$ and $\beta_{j,0}$ depends on $P_j$ and $R_0$ but, importantly, that $M_0$ is independent of $\beta_{j,0}$  provided $\beta_{j,0}$ are sufficiently small.
\end{remark}

We turn to proving \cref{thm:A}.

\begin{proof}[Proof of \cref{thm:A}]
Let $u_1,...,u_{d}$ denote the zeros of $S(w)$. Choose $\eta_0>0$ such that the $\eta_0$-neighborhoods of the zeros of $S$ do not overlap and such that for all $\beta_d$ for which $|\beta_{d}|<\beta_{d,0}$, we have $|S'(w)|>\lambda \neq 0$ provided $|z-u_1|<\eta_0$. This is possible for all $\beta_k$ since the distance of the zeros of $S$ are uniformly positive. We then define $M\geq M_0$ from \cref{le:1} to be large enough so that for all $\beta_d$ such that $|\beta_{d}|<\beta_{d,0}$, $D(0,M)$ contains the $\eta_0$-neighborhoods of the zeros of $S$. This is possible since $a_d$ is uniformly positive so the zeros of $S(w)$ are contained in some disk not containing $\infty$ independently of $\beta_d$ with $|\beta_d|<\beta_{d,0}$. From now on, $M$ and the parameter $\beta_d$ are fixed, but we may decrease the absolute values of $\beta_j$.

Next we note that for any given $\delta_1$, if $|\beta_j|$, $j<d$ are chosen sufficiently small, then for each $u_j$ and for any $z\in D(0,M)$ there is a point in $F_\delta(z)$  which is of less than $\delta_1$-distance from $u_j$. This is because for $z\in D(0,M)$, the roots of $S(w)$ are the limit points of the set of solutions to
$$S(w)+\sum_{j=0}^{d-1}\beta_jP_j(w)(w-z)^{d-j}$$
as $\beta\to (0,0,...,0,\beta_d)$.
Indeed, since $d\geq \deg( P_j(w)w^j)$ there are $d$ solutions for sufficiently small $|\beta_j|$, and each term except $S(w)$ tends uniformly to $0$ as $\beta\to(0,0,...,0,\beta_d)$ for each $z$ in any compact set not containing $\infty$.
We pick $\beta_j$, $j<d$ such that $|\beta_j|$ are small enough so that for any $z\in D(0,M)$, all points in $F_\delta(z)$ are of less than $\eta_0$-distance from the zeros of $S(w)$. 

For each $z\in D(0,M)$, there is only one point in $F_\delta(z)$ that belongs to $\{z:|z-u_l|<\eta_0\}$ for each $l$. Thus, locally we can define $w_l(w)$ to be the holomorphic branch of $F_\delta(z)$ such that $w_l(u_l)=u_l$ (note that $u_l$ are fixed points of $F_\delta$). In particular we have that $w_l(z)$ is defined for $z\in D(0,M)$. The next step is to find the derivative $w_l'(z)$. To simplify our work with the summations, we let $S_j(w)=P_j(w)$ and $c_j=\beta_j$ for $j<d$ and $S_d(w)=S(w)$ and $c_d=1$. For any $l$, differentiating \cref{eq:simplifying} implicitly with respect to $z$ yields
\begin{equation}\label{eq:deriv}
\sum_{j=0}^{d}\left(c_j(d-j) S_j(w_l(w))(w_l(w)-w)^{d-j-1}(w_l'(w)-1)\right)
\end{equation}

\begin{equation*}
+\sum_{j=0}^{d}c_j w_l'(z)S_j'(w_l(z))(w_l(z)-z)^{d-j}=0.
\end{equation*}
Suppose now that $z\in D(0,M)$ and let $l=1$. Let us solve \cref{eq:deriv} for $w_1'(z)$. We obtain

\begin{equation}
w_1'(z)=\frac{\sum_{j=0}^{d-1}c_j(d-j) S_j(w_1(z))(w_1(z)-z)^{d-j-1}}{\sum_{j=0}^{d}\left(c_j(d-j) S_j(w_1(z))(w_1(z)-z)^{d-j-1}+c_jS_j'(w_1(z))(w_1(z)-z)^{d-j}\right)},
\end{equation}
or equivalently,

\begin{equation}\label{eq:deriv2}
w_1'(z)=\frac{1}{1+g_{\beta,1}(z)},
\end{equation}
where 

$$g_{\beta,1}(z)=\frac{\sum_{j=0}^{d}c_jS_j'(w_1(z))(w_1(z)-z)^{d-j}}{\sum_{j=0}^{d-1}c_j(d-j) S_j(w_1(z))(w_1(z)-z)^{d-j-1}},$$ if
$\sum_{j=0}^{d}c_j(d-j) S_j(w_1(z))(w_1(z)-z)^{d-j-1}\neq 0$ and $w_1'(z)=0$ otherwise.

Recall that we defined $\eta_0>0$ to be such that $|S_d(w)|=|S(w)|>\lambda \neq 0$ if $|z-u_j|<\eta_0$ for some $j$. Since $|w_1(z)-u_1|<\eta_0$ if $z\in D(0,M)$, it follows that $g_{\beta,1}(z)\to \infty$ as $\beta\to (0,...,\beta_d)$ for all $z\in D(0,M)$. In particular, for sufficiently small $|c_j|=|\beta_j|$, $j<d$, we have $|g_{\beta,1}(z)|>3$ for all $z\in D(0,M)$. Thus $|w_1'(z)|<\frac{1}{2}$ and since $w_1(u_1)=u_1$, it follows that $|w_1(z)-u_1|<\frac{1}{2}|z-u_1|$. In particular, for any $z\in D(u_1,\eta_0)$ we have that $w_1^n(z)\to u_1$ as $n\to \infty$. Together with \cref{le:1}, we obtain that for any $z\in \bC$ there is a sequence $(b_j)_{j=1}^\infty$, $b_j\in \bC$ such that $b_j\in (F_\delta)^j(z)$ and $\lim_{j\to \infty}{b_j}=u_1$. We can thereby prove the following statement from \cite{ljubich_1983} precisely the same way that was originally done (see also  \cite{Bharali2016TheDO}). We omit the proof here.
 \begin{lemma}[\cite{ljubich_1983}]\label{le:misha}
For any $K$ not containing $\infty$, the unitary spectrum of $A_K$ is $\{1\}$ and the corresponding eigenspace consists only of the constant functions. 
\end{lemma}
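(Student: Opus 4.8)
The plan is to follow Lyubich's original argument from \cite{ljubich_1983} (see also \cite{Bharali2016TheDO}) essentially verbatim, the point being that the computations preceding the lemma have already supplied the two structural inputs it needs: $A_K$ is a positive operator with $A_K 1=1$ (so the constant function $1$ is an eigenvector of eigenvalue $1$ and $\|A_K\|=1$), and for every $z\in\bC$ there is a sequence $b_j$ with $b_j\in(F_\delta)^j(z)$ and $b_j\to u_1$. Since $F_\delta(K)\subset K$ and $K$ is closed, this last property also forces $u_1\in K$, so $u_1$ is an attracting fixed point of $F_\delta$ lying in $K$.

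First I would pin down the peripheral point spectrum. Suppose $A_K\phi=\lambda\phi$ with $|\lambda|=1$ and $\phi\not\equiv 0$; set $M=\|\phi\|_K>0$ and $E=\{\xi\in K:|\phi(\xi)|=M\}$. For $\xi\in E$ the chain
\[
M=|\phi(\xi)|=\Bigl|\tfrac1d\sum_{\zeta\in F_\delta(\xi)}\phi(\zeta)\Bigr|\le \tfrac1d\sum_{\zeta\in F_\delta(\xi)}|\phi(\zeta)|\le M
\]
must be an equality throughout, which forces $|\phi|\equiv M$ on $F_\delta(\xi)$ and, moreover, all the values $\phi(\zeta)$ ($\zeta\in F_\delta(\xi)$, counted with multiplicity) to equal $\lambda\phi(\xi)$. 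Hence $E$ is closed, nonempty and forward invariant, and the branch $b_j\in(F_\delta)^j(\xi)\subset E$ with $b_j\to u_1$ shows $u_1\in E$. Evaluating $\phi(\zeta)=\lambda\phi(\xi)$ at the fixed point $\xi=u_1\in F_\delta(u_1)$ gives $\phi(u_1)=\lambda\phi(u_1)$ with $\phi(u_1)\ne 0$, so $\lambda=1$. For $\lambda=1$ one splits $\phi$ into real and imaginary parts (legitimate since $A_K$ commutes with complex conjugation) and runs the same reasoning on a real fixed function $\phi$: both $\{\phi=\max_K\phi\}$ and $\{\phi=\min_K\phi\}$ are closed, nonempty and forward invariant, hence both contain $u_1$, so $\max_K\phi=\phi(u_1)=\min_K\phi$ and $\phi$ is constant. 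Thus $1$ is the only peripheral eigenvalue and it is simple.

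It then remains to upgrade this to the full peripheral spectrum $\sigma(A_K)\cap\{|z|=1\}$. Here I would use that $A_K$ is almost periodic (established below) together with the de Leeuw--Glicksberg splitting $C(K)=C(K)_{\mathrm r}\oplus C(K)_{\mathrm s}$: on $C(K)_{\mathrm s}$ one has $A_K^n\to 0$ weakly, while on $C(K)_{\mathrm r}$ the orbit $\{A_K^n\}$ is relatively compact, so $A_K|_{C(K)_{\mathrm r}}$ is invertible with compact group closure, and consequently $\sigma(A_K|_{C(K)_{\mathrm r}})$ is the closure of its eigenvalues, all of modulus $1$. By the previous paragraph that eigenvalue set is $\{1\}$, so $C(K)_{\mathrm r}$ is the line of constants, $A_K|_{C(K)_{\mathrm r}}=\mathrm{id}$, and $\sigma(A_K)\cap\{|z|=1\}=\{1\}$ with eigenspace the constants. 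As this is precisely the argument of \cite{ljubich_1983}, the remaining verifications are routine.

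The hardest point is exactly this last step: the maximum-modulus argument only constrains peripheral \emph{eigenvalues}, and excluding residual or continuous spectrum on the unit circle genuinely requires the compactness packaged in almost periodicity (equivalently, Lyubich's structural description of $A_K$). Everything else is immediate once the attracting fixed point $u_1\in K$ and the orbit-branch property proved just above are in hand.
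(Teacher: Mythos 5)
Your first two paragraphs are exactly the argument that the paper points to and omits (it defers to \cite{ljubich_1983}, see also \cite{Bharali2016TheDO}): $A_K$ is positive with $A_K\mathbbm{1}=\mathbbm{1}$; for an eigenfunction $\phi$ with unimodular eigenvalue the peak set $E=\{\xi\in K:|\phi(\xi)|=\|\phi\|_K\}$ is closed, nonempty and forward invariant by the equality case of the triangle inequality; the branch orbit $b_j\in(F_\delta)^j(\xi)$ with $b_j\to u_1$ (which also forces $u_1\in K$, since $F_\delta(K)\subset K$ and $K$ is closed) puts $u_1\in E$; and evaluating the equality case at the fixed point $u_1\in F_\delta(u_1)$ kills every unimodular eigenvalue except $1$, while the real-part argument makes every fixed function constant. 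This is correct and is the same dynamical input (the attracting fixed point $u_1$ reachable from every point of $K$) on which the paper's intended proof rests.

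The third paragraph, however, relies on a false principle, so as written it does not establish the literal identity $\sigma(A_K)\cap\{|z|=1\}=\{1\}$. Norm-precompactness of orbits together with $A_K^n\to 0$ on the flight subspace does not exclude unit-circle spectrum: the backward shift on $\ell^2$ has all orbits converging to $0$ (hence is almost periodic in the sense used here), has no unimodular eigenvalues, and yet its spectrum is the closed unit disk. Closer to the present setting, for $d=1$ the operator $A_K\phi=\phi\circ w_1$ is a composition operator with an attracting fixed point, and Koenigs linearization yields continuous eigenfunctions with eigenvalues whose moduli accumulate at $1$ from below, so the unit circle lies in the spectrum; thus the full-spectrum version of the statement can actually fail. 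The resolution is that ``unitary spectrum'' here must be read, as in Lyubich's framework, as the unimodular point spectrum (equivalently, the spectrum of the reversible part in the Jacobs--de~Leeuw--Glicksberg splitting, i.e.\ of $A_K$ restricted to the closed span of unimodular eigenvectors), and that is all \cref{prop:misha1} uses: once the only unimodular eigenvalue is $1$ with eigenspace the constants, the splitting $C(K)=C(K)_{\mathrm r}\oplus C(K)_{\mathrm s}$ with $A_K^n\to 0$ on $C(K)_{\mathrm s}$ gives $A_K^n\phi\to\mu_K(\phi)\mathbbm{1}$ directly. So you should drop (or reformulate) the final upgrade step; your first two paragraphs already prove the lemma in the sense in which the paper uses it, and the claim that almost periodicity excludes continuous or residual unit-circle spectrum is the one genuine error in the proposal.
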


The next part of the proof also uses the ideas found in \cite{ljubich_1983}. It is also these ideas that are used in the proof of \cref{th:barali} in \cite{Bharali2016TheDO} and the proof given below is similar to theirs.

So, let $\phi\in C(D(0,M))$ and $\epsilon_0>0$ be given. For $I=i_1i_2i_3,...,i_\ell$, $1\leq i_j \leq d$ and $z\in D(0,M)$, define $w_I(z)$ by $w_{i_1}\circ w_{i_2}\circ\cdots w_{i_\ell}(z)$. Let $\ell(I)$ denote the length of $I$. Take $\nu>0$ such that if $d(\xi,\zeta)<\nu$ and $\xi,\zeta\in D(0,M)$ then $|\phi(\xi)-\phi(\zeta)|<\epsilon_0$. Now, $\mathcal F=\{w_I(z)\}$ is a normal family in $D(0,M)$ by Montel's theorem, since any map in $\mathcal F$ maps $D(0,M)$ to itself. Hence, there is $r>0$ such that if $d(\xi,\zeta)<r,$ and $\xi,\zeta\in D(0,M)$ then $|w_I(\xi)-w_I(\zeta)|<\nu$ for any $w_I\in \mathcal F$. Define $\mathcal F_N=\{(w_I,I): w_I\in \mathcal F, \ell(I)=N\}$. Note that $|\mathcal F_N|=d^N$. We find that for all $N\geq 0$,

$$|A^N_{D(0,M)}\phi(\xi)-A^N_{D(0,M)}\phi(\zeta)|=\frac{1}{d^N}\sum_{(w_I,I)\in F_N}|\phi(w_I(\zeta))-\phi(w_I(\xi))|<\epsilon_0,$$
provided $d(\xi,\zeta)<r$ and $\xi,\zeta\in D(0,M)$. Next, suppose that $K\subset \setRS$ is a compact set not containing $\infty$ and let $\phi\in C(K)$. Take $\nu>0$ such that if $d(\xi,\zeta)<\nu$ and $\xi,\zeta\in K$ then $|\phi(\xi)-\phi(\zeta)|<\epsilon_0$. By \cref{le:1}, there is an $M$ such that $(F_\delta)^M(\zeta)\subset D(0,M)$ for any $\zeta \in K$. Now, since the zeros of a polynomial depend continuously on the coefficients and $M$ is finite, we can find $\eta>0$ such that if $d(\xi,\zeta)<\eta$, then $d_H((F_\delta)^m(\xi),(F_\delta)^m(\zeta))<\min(\nu, r)=\kappa$ for $m=1,2,..., M$, where $d_H$ denotes the spherical Hausdorff distance. We can order the points in $(F_\delta)^m(\xi)$ and $(F_\delta)^m(\zeta)$ as $\xi_i$, $\zeta_i$, $i=1,...,d^m$, taking multiplicity into account, so that $d(\xi_i,\zeta_i)<\kappa$. Suppose $N\leq M -1$. Then

$$|A^N_{D(0,M)}\phi(\xi)-A^N_{D(0,M)}\phi(\zeta)|=\frac{1}{d^N}\sum_{i=1}^{d^N}|\phi(\zeta_i)-\phi(\xi_i)|<\epsilon_0,$$
if $|\xi-\zeta|<\eta$. If $N\geq  M$, then all $\zeta_i$ and $\xi_i$ belong to $D(0,M)$ and so

$$|A^N_{D(0,M)}\phi(\xi)-A^N_{D(0,M)}\phi(\zeta)|=\frac{1}{d^M}\sum_{i=1}^{d^M}\frac{1}{d^{N-M}}\sum_{\ell(I)=N-M}|\phi(w_I(\zeta_i))-\phi(w_I(\xi_i))|<\epsilon_0.$$

Since $|A^m_K(\phi)|\leq\|\phi(z)\|$, we have proved that for any $K$ not containing $\infty$, $A_K$ is almost periodic. Since the function $\mathbbm 1$ that is identically equal to $1$ is an eigenvector of $A_K$, by \cref{prop:misha1} there exists a complex regular measure $\mu_K$ on $K$ such that $$\left\|A_K^m(\phi)-\left(\int \phi \: d\mu_K\right)\mathbbm 1\right\|_K\to 0,\quad \text{ as }m\to \infty,$$
and it is clear that $\mu_K$ is a probability measure on $K$.
Note that since $|A^m_K(\phi)|\leq \max_{z\in K}|\phi(z)|$ we get that $\mu$, the functional in \cref{prop:misha1}, is bounded, hence continuous. Therefore we have an integral representation as above by the Riesz-Markov theorem.

Next, fix a compact $K\subset \setRS$ not containing $\infty$. We still assume that $F_\delta(K)\subset K$. Let us consider $\mu_K$ as a measure on $\setRS$ (it is 0 outside of $K$). We shall now prove that the measure $\mu_K$ is independent of $K$.  We have that for any $z\in K$,  $$u_1\in \overline{\bigcup_{m=0}^\infty (F_\delta)^m(z)}\subset K.$$ Given $\phi\in C(\setRS)$ denote by $\phi_K$ its restriction to $K$. We have that 

$$\int\phi \:d\mu_K=\int \phi_K\:d\mu_K=\lim_{m\to \infty}A^m_K(\phi_K(u_1))=\lim_{m\to \infty}A^m(\phi(u_1)),$$
showing independence of $K$.  Next,

$$\int A(\phi(z))d\mu=\int A(\phi_k(z))d\mu_k=\int \phi_K(z)d\mu_K=\int \phi(z)d\mu,$$
where the second inequailty comes from the fact that $\mu_K$ is $A_K^*$-invariant by \cref{le:misha}. This shows that $\mu$ is $A^*$-invariant, i.e. $$(F_\delta)_*(\mu)=\mu d.$$

If $F_\delta(K)\not\subset K$, replace $K$ with a compact $L\supset K$ not containing $\infty$ such that $F_\delta(L)\subset L$, which is possible by \cref{le:1}. Then

$$\left\|A^m\phi-\int \phi \:d\mu\right\|_K\leq \left\|A^m\phi-\int \phi\: d\mu\right\|_L=\left\|A_L^m\phi_L-\int \phi_L d\mu_L\right\|_L\to 0$$

as $m\to \infty$. Thus there is a measure $\mu$ on $\setRS$ such that 

\begin{equation}\label{eq:concl}\left\|A^m\phi-\int \phi \:d\mu\right\|_K\to 0,
\end{equation}
for any compact $K$ not containing $\infty$. We denote this measure by $\mu_{F_\delta}$ and recall that  
\begin{equation}\label{eq:invariant}(F_\delta)_*(\mu_{F_\delta})=\mu_{F_\delta}d.\end{equation} Now, take $a\neq \infty$, set $K=\{a\},$ and define $\mu_{z,m}=\frac{1}{d^m}(F_\delta^m)_*(\delta_a).$ Note that for any $\phi \in C(\setRS),$ $$\int \phi(z)d\mu_{z,m}=A^m \phi(z).$$
Using \eqref{eq:concl}, we find that 
$$\int \phi(z)d\mu_{z,m}=A^m \phi(z) \to \int\phi\: d\mu_{F_\delta},\quad  \text{ as }m \to \infty.$$
This proves the first part of \cref{thm:A}.

We turn to the second part. By \eqref{eq:invariant}, $\supp \mu_{F_\delta}$ is forward invariant, and of course it is closed. Let $S\neq \{\infty\}$ be a closed and non-empty forward invariant set. We have that for any $a\in S\setminus \infty$, $$\frac{1}{d^m}(F_\delta^m)_*(\delta_a) \xrightarrow{weak *}\mu_{F_\delta}$$
so $\supp(\mu_{F_\delta})$ is contained in $S$. It follows that  $\supp \mu_{F_\delta}$ is the minimal closed under $ {F_\delta}$ forward invariant set containing a point in $\bC$. We turn to proving that for any $\delta>0$ there is $\epsilon_\delta>0$ such that $\supp \mu_{F_{\delta}}$ is contained in the $\delta$-neighborhood of the zeros of $R_0(w)$. We choose $\beta_d$ small enough so that it fulfills the criterion above and such that the zeros of $S(w)$ are contained in the $\delta/2$-neighborhood of the zeros of $R_0(w)$. Next, we pick $\eta_0$ small enough to fulfill the criteria used in the proof and such that $\eta_0<\delta/2$. It is easy to see from the proof above that  $\supp \mu_{F_{\delta}}$ is contained in the $\eta_0$-neighborhood of the zeros of $S(w)$ so the conclusion of the statement is clear.  
This concludes the proof of \cref{thm:A}.

\end{proof} 

Under some extra assumptions, $a$ can be chosen equal to $\infty$:

\begin{proposition}\label{prop:earlier}

Let $R_0(w)$ be any polynomial of degree $d$ with only simple zeros. Let $P(z,w)$ by any polynomial of (total) degree at most $d$, such that for the minimal $j$ such that $P_j(w) \not \equiv 0$, $Q_j(z)$ is not constant and $j<d$. 

Let $F_{\delta}:z\to w$ be the holomorphic correspondence defined by $R_0(w)+\delta P(z,w)=0$ for any $\delta\geq 0$. For any $\epsilon>0$ there is $\Delta>0$ such that if $|\delta|<\Delta$, there is a probability measure $\mu_{F_\delta}$ on $\setRS$ with $(F_\delta)_*(\mu_{F_\delta})=\mu_{F_\delta}d$ such that for any $a\in \setRS$,

$$\frac{1}{d^m}(F_\delta^m)_*(\delta_a) \xrightarrow{weak *}\mu_{F_\delta}, \quad \text{ as } m\to \infty.$$
Additionally, the support $\supp \mu_{F_{\delta}}$ is contained in the $\epsilon$-neighborhood of the zeros of $R_0(z)$. Furthermore, $\supp \mu_{F_{\delta}}$  is the minimal closed under $F_{\delta}$ forward invariant set.

\end{proposition}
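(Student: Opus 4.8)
Most of \cref{prop:earlier} is already contained in \cref{thm:A}: the measure $\mu_{F_\delta}$, the identity $(F_\delta)_*(\mu_{F_\delta})=\mu_{F_\delta}d$, the location of $\supp\mu_{F_\delta}$ in an $\epsilon$-neighbourhood of the zeros of $R_0$, and the convergence $\frac{1}{d^m}(F_\delta^m)_*(\delta_a)\xrightarrow{weak *}\mu_{F_\delta}$ for every $a\in\bC$ are verbatim its conclusions. So the plan is to prove only the two genuinely new points: the convergence for $a=\infty$, and the slightly stronger minimality statement. As in \cref{thm:A}, I would first reduce to the polynomials $R_\beta(z,w)$ of \eqref{eq:def1} with all $|\beta_j|$ small and $\beta_{j_0}\neq 0$; the hypothesis is inherited, since $\beta_{j_0}P_{j_0}\not\equiv 0$ whenever $\beta_{j_0}\neq 0$.

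\textbf{Step 1 (the correspondence at $\infty$; the crux).} Let $j_0$ be the smallest index with $P_{j_0}\not\equiv 0$, so by hypothesis $j_0<d$ and $p:=\deg P_{j_0}$ satisfies $1\le p\le j_0\le d-1$. First note that for $|\beta|$ small the coefficient of $w^d$ in $R_\beta(z,w)$ is a nonzero constant, independent of $z$; hence $\deg_w R_\beta(z,\cdot)=d$ for every finite $z$, and $F_\delta$ sends no point of $\bC$ to $\infty$. Next, divide $R_\beta(z,w)$ by $z^{d-j_0}$ and let $z\to\infty$: every term other than $\beta_{j_0}P_{j_0}(w)(w-z)^{d-j_0}$ is $o(1)$ uniformly for $w$ in a fixed compact set, while $z^{-(d-j_0)}\beta_{j_0}P_{j_0}(w)(w-z)^{d-j_0}\to(-1)^{d-j_0}\beta_{j_0}P_{j_0}(w)$. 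By Hurwitz's theorem, exactly $p$ of the $d$ roots $w(z)$ of $R_\beta(z,\cdot)$, counted with multiplicity, remain in a fixed compact set and accumulate at the zeros of $P_{j_0}$, while the remaining $d-p\ge 1$ escape to $\infty$. Passing to the compactified curve $\overline{\Gamma}$ and recalling that $(F_\delta)_*(\delta_\infty)$ records the points $(\infty,w)\in\overline{\Gamma}$ with their intersection multiplicities, this gives
\[
(F_\delta)_*(\delta_\infty)=(d-p)\,\delta_\infty+\nu ,
\]
where $\nu\ge 0$ has total mass $p\ge 1$ and is supported on the finite zero set of $P_{j_0}$ in $\bC$. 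This is precisely where the hypothesis is used, and it cannot be dropped: if $P_{j_0}$ were a nonzero constant, or if $j_0=d$, then all $d$ roots would escape to $\infty$, $\infty$ would be totally invariant, and $\frac{1}{d^m}(F_\delta^m)_*(\delta_\infty)=\delta_\infty$ would not converge to $\mu_{F_\delta}$. I expect this step --- the root count at infinity and the verification of the form of $(F_\delta)_*(\delta_\infty)$ --- to be the main obstacle; what follows is bookkeeping.

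\textbf{Step 2 (unwinding the iterates).} Put $q:=(d-p)/d\in[0,1)$. Using linearity of the pushforward, $(F_\delta^m)_*=((F_\delta)_*)^m$, and Step 1, a routine induction yields
\[
\frac{1}{d^m}(F_\delta^m)_*(\delta_\infty)=q^{m}\,\delta_\infty+\frac{1}{d}\sum_{k=1}^{m}q^{m-k}\,\frac{1}{d^{k-1}}(F_\delta^{k-1})_*(\nu).
\]
Fix $\phi\in C(\setRS)$. Since $\supp\nu$ is a finite subset of $\bC$, \cref{thm:A} together with linearity gives
\[
a_k:=\frac{1}{d^{k-1}}\langle (F_\delta^{k-1})_*(\nu),\phi\rangle\;\longrightarrow\;p\int\phi\,d\mu_{F_\delta}=:L,
\]
with $|a_k|\le p\,\|\phi\|_{\setRS}$ for all $k$. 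Because $q<1$, an elementary Abel/dominated-convergence argument for the series gives $\sum_{k=1}^{m}q^{m-k}a_k\to L/(1-q)$, and since $1-q=p/d$ we obtain
\[
\frac{1}{d^m}\langle (F_\delta^m)_*(\delta_\infty),\phi\rangle\;\longrightarrow\;\frac{1}{d}\cdot\frac{L}{1-q}=\int\phi\,d\mu_{F_\delta},
\]
that is, $\frac{1}{d^m}(F_\delta^m)_*(\delta_\infty)\xrightarrow{weak *}\mu_{F_\delta}$.

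\textbf{Step 3 (minimality) and conclusion.} Let $S\neq\emptyset$ be closed and forward invariant under $F_\delta$. If $\infty\in S$, then $F_\delta(\infty)\subset S$ and, by Step 1, $F_\delta(\infty)$ contains a zero of $P_{j_0}$, which lies in $\bC$; if $\infty\notin S$, then $S\subset\bC$ already meets $\bC$. In either case pick $a\in S\cap\bC$; since $F_\delta^m(a)\subset S$ for all $m$ and $\frac{1}{d^m}(F_\delta^m)_*(\delta_a)\to\mu_{F_\delta}$, we get $\supp\mu_{F_\delta}\subset S$. As $\supp\mu_{F_\delta}$ is itself closed, nonempty and forward invariant (from $(F_\delta)_*\mu_{F_\delta}=\mu_{F_\delta}d$), it is the minimal closed forward-invariant set. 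All remaining assertions are quoted directly from \cref{thm:A}.
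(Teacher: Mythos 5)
Your proposal is correct, but it reaches the conclusion by a genuinely different route than the paper. The paper stays inside the operator-theoretic framework of \cref{thm:A}: it proves that the family $\{A^m\phi\}_{m\geq 1}$ is equicontinuous at $\infty$, using that $d_1\geq 1$ of the $d$ images of $\infty$ (counted with multiplicity) are finite, so that after $l$ steps all but a proportion $\left(\tfrac{d-d_1}{d}\right)^l$ of the image tree of $\infty$ has entered $\bC$, where the equicontinuity already established in the proof of \cref{thm:A} applies; almost periodicity of $A$ then gives convergence of $A^m\phi$ uniformly, in particular at $\infty$. You instead argue at the level of measures: you identify $(F_\delta)_*(\delta_\infty)=(d-p)\,\delta_\infty+\nu$ with $p=\deg P_{j_0}\geq 1$ and $\nu$ supported on the zeros of $P_{j_0}$ --- the same key fact, namely that the hypothesis forces a definite fraction of mass to leak from $\infty$ into $\bC$ at each step --- and then deduce the convergence at $\infty$ from \cref{thm:A} via the exact recursion and a geometric-series summation. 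Your route is more self-contained given \cref{thm:A} and isolates exactly where the hypothesis enters and why it cannot be dropped; the paper's route yields the slightly stronger uniform convergence of $A^m\phi$ on all of $\setRS$ and reuses its existing machinery. Two points to make airtight if you write this up: justify the multiplicity bookkeeping at $z=\infty$ by intersecting the closure $\overline{\Gamma}$ with the fibre $\{z=\infty\}$ in the coordinate $s=1/z$ (the local multiplicity at $(\infty,w_0)$ equals the order of $w_0$ as a zero of the leading $z$-coefficient, which is a nonzero multiple of $P_{j_0}(w)$, so the finite mass is exactly $p$ and the mass at $\infty$ is exactly $d-p$), and state explicitly that you use $(F_\delta^m)_*=\big((F_\delta)_*\big)^m$ on point masses, which the paper also uses implicitly through the operator $A$. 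Your Step 3 is essentially the paper's minimality argument, upgraded exactly as needed by the observation that $\{\infty\}$ is not forward invariant under the stated hypothesis.
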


\begin{proof}
It suffices to prove equicontinuity of $\{A^m\phi\}_{m=1}^\infty$ at $\infty$, the rest follows precisely the same way as in the proof of \cref{thm:A}. Denote the degree in $z$ of the coefficient of the leading term in $u$ by $d_1>1$. There are $d_1$ (counting multiplicity) points in $F_\delta(\infty)$ different from $\infty$. Take $\epsilon_0>0$ and set $M=\|\phi\|_{\setRS}$. Find $l$ large enough so that $\left(\frac{d-d_1}{d}\right)^l<\frac{\epsilon_0}{4M}$. Denote the $d^l-(d-d_1)^l$ (counting multiplicity) points in $(F_\delta)^l(\infty)$ by $\xi_i$. We have $\xi_i\in \bC$ so we can in the same way as in the proof of \cref{thm:A} find $r>0$ such that if $d(\xi_i,\zeta)<r$ then 
$$|A^m\phi(\xi_i)-A^m\phi(\zeta)|\leq \epsilon_0/2$$
for all $m.$
Next, we can find $\eta$ such that if $d(\infty,\zeta)<\eta$ then $$d(\xi_i,\zeta_i)<r$$
for some points $$\zeta_i\in (F_\delta)^l(\zeta).$$
Summarizing we obtain that if $d(\infty,\zeta)<\eta$, then

$$|A^{l+m}\phi(\infty)-A^{l+m}\phi(\zeta)|$$
$$ \leq\frac{1}{d^l}\left(2M(d-d_1)^l+\sum_{i=1}^{d^l-(d-d_1)^l}|A^m\phi(\xi_i)-A^m\phi(\zeta_i)|\right) <\epsilon_0$$
for all $m\geq 0$. As all the members of the finite family $\{A^m\phi\}_{m=1}^{l-1}$ are continuous, the family itself is clearly equicontinuous at $\infty$. The conclusion of the first part of the proposition now follows in the same way as in \cref{thm:A}.
We have
$$(F_\delta)_*(\mu_{F_\delta})=d\mu_{F_\delta}.$$ In particular, $\supp \mu_{F_\delta}$ is closed and forward invariant. Let $S$ be a closed and non-empty forward invariant set under $F_\delta$. We have that for any $a\in S$, $$\frac{1}{d^m}(F_\delta^m)_*(\delta_a) \xrightarrow{weak *}\mu_{F_\delta}$$
so $\supp(\mu_{F_\delta})$ is contained in $S$. It follows that $\supp \mu_{F_\delta}$  is the minimal closed under $ {F_\delta}$ forward invariant set.
This concludes the proof.

\end{proof}

 \section{Proof of \cref{thm:C}}\label{sec:proofsb}

We recall that $T_n:z\to w$ is the holomorphic correspondence defined by $\frac{T[(w-z)^n]}{(w-z)^{n-k}}=0$, where $T$ is a linear operator and that if there is a minimal set $S$ with the property that it is closed, contains a point in $\bC$ and is forward invariant set under $T_n$, then $S\setminus {\infty}\subset \bC$ is equal to $\minvset{H,n}$.
\begin{lemma}
If $T$ is non-degenerate, then $T_n(\bC)\subset\bC$ for sufficiently large $n$.
\end{lemma}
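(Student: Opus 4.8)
The statement asserts that when $T$ is non-degenerate, the multivalued map $T_n$ sends finite points to finite points for all sufficiently large $n$. Equivalently, we must show that $\infty \notin T_n(z)$ for any $z \in \bC$, which amounts to controlling the behaviour at infinity of the polynomial $\frac{T[(w-z)^n]}{(w-z)^{n-k}}$ in the variable $w$.

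First I would write out $T[(w-z)^n]$ explicitly. Applying $T = \sum_{j=0}^k Q_j(w)\frac{d^j}{dw^j}$ to $(w-z)^n$ gives
\begin{equation*}
T[(w-z)^n] = \sum_{j=0}^k Q_j(w)\,\frac{n!}{(n-j)!}\,(w-z)^{n-j},
\end{equation*}
so that, after dividing by $(w-z)^{n-k}$,
\begin{equation*}
\frac{T[(w-z)^n]}{(w-z)^{n-k}} = \sum_{j=0}^k Q_j(w)\,\frac{n!}{(n-j)!}\,(w-z)^{k-j}.
\end{equation*}
This is a polynomial in $w$ (for fixed $z$), and $T_n(z)$ consists of its zeros in the variable $w$ (together with possible points at infinity coming from the compactification of the curve). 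The key point is that $\infty \in T_n(z)$ precisely when the degree in $w$ of this polynomial drops below its generic degree as $z$ varies — i.e. when the leading coefficient in $w$ vanishes. So the plan is to identify the $w$-degree of the above expression and show its leading coefficient is a nonzero constant independent of $z$ for large $n$.

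Next I would locate the term of top degree in $w$. The $j$-th summand $Q_j(w)\,\frac{n!}{(n-j)!}(w-z)^{k-j}$ has degree $\deg Q_j + (k-j)$ in $w$; its leading coefficient (the coefficient of $w^{\deg Q_j + k - j}$) is $\frac{n!}{(n-j)!}$ times the leading coefficient of $Q_j$, and this does not involve $z$. Non-degeneracy says $\deg Q_j - j \le \deg Q_k - k$ for all $j$, with equality at $j=k$; hence $\deg Q_j + (k-j) \le \deg Q_k$ for all $j$, so the overall $w$-degree is at most $\deg Q_k =: D$, and the coefficient of $w^D$ is
\begin{equation*}
\sum_{j:\, \deg Q_j - j = \deg Q_k - k} \frac{n!}{(n-j)!}\,(\text{lead. coeff. of }Q_j),
\end{equation*}
which is a polynomial in $n$ of degree exactly $k$ (the $j=k$ term contributes $\frac{n!}{(n-k)!}(\text{lead.\ coeff.\ of }Q_k)$, dominating the others since $j < k$ for all other contributing indices). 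In particular this coefficient is nonzero for all sufficiently large $n$, and crucially it is independent of $z$. Therefore, for $n$ large, $\frac{T[(w-z)^n]}{(w-z)^{n-k}}$ has $w$-degree exactly $D$ with a $z$-independent nonzero leading coefficient, so the algebraic curve it defines contains no component of the form $(\setRS, w_0)$ that would force the point at infinity into $T_n(z)$; more directly, for each fixed $z \in \bC$ the equation has exactly $D$ roots in $\bC$ (with multiplicity) and none at $w = \infty$. Hence $T_n(z) \subset \bC$.

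The main obstacle — really the only subtlety — is bookkeeping the interaction between the $z$-independence of the relevant leading coefficients and the compactification defining $T_n$: one must make sure that "$\infty \in T_n(z)$" is correctly characterized by the drop of the $w$-degree, and rule out the degenerate possibility that the whole expression vanishes identically or that the curve acquires a vertical line at infinity. Both are handled by the degree count above, since the coefficient of $w^D$ is a fixed nonzero constant (for large $n$) not depending on $z$. A minor point to check is that $D = \deg Q_k \ge 0$ and that when $D = 0$ (the exactly solvable edge case) the expression is a nonzero constant in $w$ only if lower $j$ also fail to contribute degree — but non-degeneracy with $\deg Q_k - k$ maximal handles this uniformly, and in any case the excerpt's hypothesis that all zeros of $Q_k$ are simple (used later) guarantees $\deg Q_k \ge 1$ in the situations of interest. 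I would close by noting that the bound "sufficiently large $n$" is exactly what is needed to ensure the polynomial-in-$n$ leading coefficient stays away from zero.
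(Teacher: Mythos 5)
Your proposal is correct and follows essentially the same route as the paper: expand $T[(w-z)^n]/(w-z)^{n-k}=\sum_j (n)_j Q_j(w)(w-z)^{k-j}$ and use non-degeneracy to see that its $w$-degree is $\deg Q_k$ with a $z$-independent leading coefficient that is nonzero for large $n$ (dominated by the $(n)_k$ term), so no root of $T_n(z)$ escapes to infinity. The paper states this in one sentence; your write-up simply makes the same degree/leading-coefficient count explicit.
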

\begin{proof}
$\frac{T[(w-z)^n]}{(w-z)^{n-k}}$ is a polynomial with leading term in $w$ of degree $\deg Q_k(w)$ for sufficiently large $n$ and the coefficient of the leading term for large $n$ is a non-zero constant. 
\end{proof}

We have the following useful implicit characterization of $\minvset{H,n}$. When we take the closure below, we do it in $\bC$.
\begin{lemma}\label{th:implicit}
Suppose that $\minvset{H,n}$ exists and $T_n(\bC)\subset \bC$. Then, for any $u\in \minvset{H,n}$, $\minvset{H,n}=\overline{\cup_{j=0}^\infty T^j_n(u)}.$
\end{lemma}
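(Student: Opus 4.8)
The plan is to prove the two inclusions $\overline{\cup_{j=0}^\infty T^j_n(u)}\subset \minvset{H,n}$ and $\minvset{H,n}\subset\overline{\cup_{j=0}^\infty T^j_n(u)}$ separately, using that $\minvset{H,n}$ is by hypothesis the minimal closed forward-invariant-under-$T_n$ set containing a point of $\bC$, together with the structure supplied by \cref{thm:A} (via \cref{thm:C}, $T_n$ has the properties of $F_\delta$, so in particular $\minvset{H,n}=\supp\mu_{T_n}$ and the equidistribution $\frac{1}{d^m}(T_n^m)_*(\delta_a)\xrightarrow{weak*}\mu_{T_n}$ holds for every $a\in\bC$).

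For the first inclusion: since $u\in\minvset{H,n}$ and $\minvset{H,n}$ is forward invariant under $T_n$, we have $T_n(u)\subset\minvset{H,n}$, and inductively $T_n^j(u)\subset\minvset{H,n}$ for every $j\geq 0$; here I would use $T_n(\bC)\subset\bC$ so that each $T_n^j(u)$ stays in $\bC$ and the iterates make literal sense. Hence $\cup_{j=0}^\infty T^j_n(u)\subset\minvset{H,n}$, and since $\minvset{H,n}$ is closed (in $\bC$), taking the closure in $\bC$ preserves the inclusion: $\overline{\cup_{j=0}^\infty T^j_n(u)}\subset\minvset{H,n}$.

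For the reverse inclusion I would show that $S:=\overline{\cup_{j=0}^\infty T^j_n(u)}$ is itself a closed forward-invariant-under-$T_n$ set containing a point of $\bC$ (namely $u$), and then invoke minimality of $\minvset{H,n}$ to conclude $\minvset{H,n}\subset S$. It is closed by construction and nonempty. For forward invariance, take $w\in S$; then $w$ is a limit of points $w_m\in T_n^{j_m}(u)$, so each $\zeta\in T_n(w)$ should be approximated by points of $T_n(w_m)\subset T_n^{j_m+1}(u)\subset S$, using continuity of the multivalued map $T_n$ in the Hausdorff metric (the zeros of $\frac{T[(w-z)^n]}{(w-z)^{n-k}}=0$ depend continuously on the parameter $z$, the same fact already used repeatedly in the proof of \cref{thm:A}); since $S$ is closed this gives $\zeta\in S$, hence $T_n(w)\subset S$. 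The point needing care is the behavior near $\infty$: one must check that $T_n(w)\subset\bC$ so that no mass escapes to $\infty$ and the closure-in-$\bC$ is the right object — this is exactly where $T_n(\bC)\subset\bC$ (guaranteed for large $n$ by the preceding lemma, using non-degeneracy) is needed.

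The main obstacle is making the continuity/limit argument for forward invariance fully rigorous when the number of preimages can drop in the limit (e.g. some branch of $T_n$ sending $w_m$ near $\infty$), i.e. controlling the Hausdorff convergence $T_n(w_m)\to T_n(w)$ uniformly on the relevant compact sets; but since $T_n(\bC)\subset\bC$ and we only iterate finitely-far-along fixed points before taking closure, this is handled exactly as the analogous continuity estimates in the proof of \cref{thm:A} (the passage where $\eta$ is chosen so that $d_H((F_\delta)^m(\xi),(F_\delta)^m(\zeta))$ is small). Once forward invariance is established, minimality of $\minvset{H,n}$ finishes the argument, and combining the two inclusions yields $\minvset{H,n}=\overline{\cup_{j=0}^\infty T^j_n(u)}$.
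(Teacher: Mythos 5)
Your proposal is correct and follows essentially the same route as the paper: the easy inclusion via closedness and forward invariance of $\minvset{H,n}$, and the reverse inclusion by showing $\overline{\cup_{j=0}^\infty T^j_n(u)}$ is closed, nonempty and forward invariant (using continuity of the zero locus in the parameter $z$, with $T_n(\bC)\subset\bC$ preventing escape to $\infty$) and then invoking minimality. The appeal to the equidistribution from \cref{thm:A}/\cref{thm:C} in your opening paragraph is not actually needed and plays no role in your argument, just as it plays none in the paper's.
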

\begin{proof}
Since $\minvset{H,n}$ is closed and forward invariant under $T_n$, for any $z\in \minvset{H,n}$, we have that $\overline{\cup_{j=0}^\infty T^j_n(z)}\subset \minvset{H,n}$. 
We need to prove that $\overline{\cup_{j=0}^\infty T^j_n(z)}$ is $T_{H,n}$-invariant. Take $w\in \overline{\cup_{j=0}^\infty T^j_n(z)}$. We shall prove that $T_n(w)\in \overline{\cup_{j=0}^\infty T^j_n(z)}$. Suppose first $w\in \cup_{j=0}^\infty T^j_n(z)$. Then $w\in  T^K_n(z)$ for some finite $K$ and $T_n(w)\subset T^{K+1}_n(z)\subset \overline{\cup_{j=0}^\infty T^j_n(z)}$. Hence, suppose $w\in \overline{\cup_{j=0}^\infty T^j_n(z)}\setminus \cup_{j=0}^\infty T^j_n(z)$. Then there is a sequence $(z_m)\subset \cup_{j=0}^\infty T^j_n(z)$ with limit $w$. Moreover, $T_n(z_m)\in \cup_{j=0}^\infty T^j_n(z)$ for each $m$. Since the zero-locus of a polynomial is continuous in the coefficients, for every $w_0\in T_n(w)$ there is a sequence $(x_m)$ such that  $
x_m\in T(z_m)$ and $\lim_{m\to \infty} x_m =w_0$. In particular, $T_n(w)\subset \overline{\cup_{j=0}^\infty T^j_n(z)}$.
\end{proof}

We can now give the following proof.

 \begin{proof}[Proof of \cref{thm:C}]
Suppose $T=\sum_{j=0}^kQ_j(w)\frac{d^j}{dw^j}$ is non-degenerate and $Q_k(w)$ has only simple zeros. The solutions to $$\frac{T(w-z)^n}{(w-z)^{n-k}}=0$$
are given by the solutions to
$$\frac{T(w-z)^n}{(n)_k(w-z)^{n-k}}=Q_k(w)+\sum_{j=0}^{k-1}\frac{(n)_j}{(n)_k}(w-z)^{k-j}Q_j(w)=0.$$ 
Hence, $\frac{T(z-w)^n}{(n)_k(w-z)^{n-k}}$ can be written as written as $R_\beta$ in \cref{thm:A} above with $R_{0}(w)=Q_k(w)$, $R_{d}(w)\equiv 0$, $R_{d-j}(w)=Q_{k-j}(w)$ for $j=1,...,k$ , $R_j(w)\equiv 0$ for $j<k-M$, and $\beta_{k-j}=\frac{(n)_{k-j}}{(n)_k}$. Since $T$ is non-degenerate we have $\deg R_j(w)\leq j$. We have that for $j<k$, $\frac{(n)_{k-j}}{(n)_k}\to 0$ as $n\to \infty$. Hence, taking $n$ large enough so that $\frac{(n)_{j-1}}{(n)_k}\leq \frac{(n)_{k-1}}{(n)_k}<\Delta$ proves that $\minvset{H,n}$ exists and is the support of the measure $\mu_{T_n}$. 

To complete the proof, we will need the following lemma. 

\begin{lemma}\label{le:perfect}
Suppose that $T$ is non-degenerate, of order $k$ at least 2 and that there is a simple zero $u_1$ of $Q_k(w)$ such that  not all $Q_j(w)$ have $u_1$ as a common zero. Then $\minvset{H,n}$ is a perfect set for $n$ sufficiently large.
\end{lemma}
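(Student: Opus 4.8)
The plan is to show that every point of $\minvset{H,n}$ is an accumulation point of $\minvset{H,n}$, so that (being closed) it is perfect. By \cref{thm:C} (first bullet, already established in the preceding paragraph) $\minvset{H,n} = \supp \mu_{T_n}$, and by \cref{th:implicit} we have $\minvset{H,n} = \overline{\bigcup_{j\ge 0} T_n^j(u)}$ for any $u\in\minvset{H,n}$. The strategy is: first produce \emph{one} accumulation point in $\minvset{H,n}$, then use the correspondence dynamics to spread this property to every point. I would take $u_1$, the simple zero of $Q_k(w)$ that is not a common zero of all the $Q_j$. As in the proof of \cref{thm:A}, $u_1$ is a fixed point of $T_n$ with a well-defined attracting local branch $w_1(z)$ satisfying $w_1(u_1)=u_1$ and $|w_1'(u_1)|<1/2$ for $n$ large. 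The key point is that, because not all $Q_j$ vanish at $u_1$, the correspondence $T_n$ has \emph{another} branch through a neighborhood of $u_1$ whose value at $u_1$ is a point $v_1 \ne u_1$: indeed the defining equation $Q_k(w)+\sum_{j<k}\beta_{k-j}(w-z)^{k-j}Q_j(w)=0$ evaluated at $z=u_1$ has $w=u_1$ as a root of $Q_k$, but since some $Q_{j_0}(u_1)\ne 0$ the full polynomial in $w$ is not divisible by a high enough power of $(w-u_1)$, so it has roots other than $u_1$ near $u_1$; more robustly, for $n$ large the polynomial $\frac{T[(w-z)^n]}{(n)_k(w-z)^{n-k}}$ has degree $\deg Q_k \ge 1$ in $w$ with the remaining $\deg Q_k - 1 \ge 0$ roots, together with the lower-order terms, producing at least one further point $v_1 \in T_n(u_1)$ with $v_1 \ne u_1$.

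With this in hand, here is the spreading argument. Fix any $z_0 \in \minvset{H,n}$ and $\varepsilon>0$; I must find a point of $\minvset{H,n}$ in $D(z_0,\varepsilon)\setminus\{z_0\}$. Since $\minvset{H,n}=\overline{\bigcup_j T_n^j(u_1)}$, there is a finite word $I$ with $w_I(u_1) =: p$ lying in $D(z_0,\varepsilon/2)$, where $w_I$ is a branch of $T_n^{\ell(I)}$. Now I run the dynamics \emph{backward along this branch} conceptually, or rather: I perturb the starting point. Consider instead of $u_1$ the point $v_1 \in T_n(u_1)$, $v_1\ne u_1$. If $v_1 \ne z_0$ is already within $\varepsilon$ of $z_0$ we may be done, but in general I argue as follows: the branches $w_I$ are holomorphic (hence open, if non-constant) on a neighborhood of the relevant points, so applying the branch structure to the two nearby points $u_1$ and a point on the $v_1$-branch close to $u_1$ yields two nearby but distinct points in $\bigcup_j T_n^j(u_1)\subset \minvset{H,n}$ near $p$, hence near $z_0$. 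The cleanest way to organize this: show first that $\minvset{H,n}$ contains at least two points (immediate from $u_1$ and $v_1$, since $T_n(\minvset{H,n})\subset \minvset{H,n}$), hence is infinite (a non-empty forward-invariant set under a correspondence of degree $\ge 2$ with an attracting fixed point and a second branch cannot be finite — a finite forward-invariant set would force every orbit to be preperiodic, contradicting that $w_1^m(v_1)\to u_1$ gives infinitely many distinct points when $v_1\ne u_1$). An infinite closed subset of $\setRS$ has an accumulation point $q \in \minvset{H,n}$. Then for arbitrary $z_0\in\minvset{H,n}$, pick a branch word $I$ with $w_I(q)$ close to $z_0$ (possible by \cref{th:implicit} applied with base point $q$, after checking $q$ is not $\infty$, which follows from the $\epsilon$-neighborhood-of-zeros-of-$Q_k$ containment in \cref{thm:C}); since $w_I$ is a non-constant holomorphic map near $q$ and $q$ is an accumulation point of $\minvset{H,n}$, the image $w_I(\minvset{H,n}\cap(\text{nbhd of }q))$ accumulates at $w_I(q)$, and this image lies in $\minvset{H,n}$. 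Hence every point of $\minvset{H,n}$ is an accumulation point, so $\minvset{H,n}$ is perfect.

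The main obstacle I anticipate is the step asserting the existence of the second branch value $v_1\ne u_1$ at $u_1$ and, relatedly, ensuring the branches $w_I$ used in the spreading step are genuinely \emph{non-constant} holomorphic maps rather than constants. Constancy of a branch would collapse the openness argument; one must use non-degeneracy plus $n$ large to guarantee that $T_n$ is, after dividing by $(w-z)^{n-k}$, a correspondence whose branches depend non-trivially on $z$ — equivalently that $\frac{T[(w-z)^n]}{(w-z)^{n-k}}$, viewed as a polynomial in $w$ with coefficients in $z$, genuinely involves $z$. If $k\ge 2$ and some $Q_{j}$ with $j<k$ is not identically zero (the hypothesis of the Cantor-set bullet, and implied here), then the term $\beta_{k-j}(w-z)^{k-j}Q_j(w)$ with $k-j\ge 1$ does involve $z$, so the correspondence is not a constant multifunction and its branches are non-constant. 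Handling this carefully — and checking that the accumulation point $q$ and all the intermediate points stay in $\bC$ away from $\infty$, which the support-localization in \cref{thm:C} provides — is where the real work lies; the rest is a soft topological-dynamics argument.
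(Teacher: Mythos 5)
Your strategy (produce one accumulation point of $\minvset{H,n}$, then spread it to every point using openness of the holomorphic branch maps) is genuinely different from the paper's, but as written it has two gaps, and they sit exactly where the hypothesis that not all $Q_j$ vanish at $u_1$ and the phrase ``for $n$ sufficiently large'' must do work. First, your claim that $w_1^m(v_1)$, $m\geq 0$, are infinitely many distinct points is unproved: a priori $w_1^{m}(v_1)=u_1$ could hold for some finite $m$, i.e.\ some $z\neq u_1$ could satisfy $u_1\in T_n(z)$, and then your orbit is finite and the infinitude argument collapses. This is a real possibility that must be excluded, not a formality: since some $Q_{j}(u_1)\neq 0$, the condition $u_1\in T_n(z)$ reads $\sum_{j<k}(n)_jQ_j(u_1)(u_1-z)^{k-j}=0$, a non-trivial polynomial equation in $z$ which does have roots $z\neq u_1$; the actual content is that for large $n$ these roots leave the invariant disk $D(0,M)$ of \cref{le:1}. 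The paper spends the first half of its proof on precisely this estimate ($|Q_{J-1}(u_1)|\leq\sum_{j\leq J-2}\bigl|\tfrac{(n)_j}{(n)_{J-1}}Q_j(u_1)(2M)^{J-1-j}\bigr|$, which fails for large $n$), and that is where the hypothesis on $u_1$ enters. Your proposal instead uses that hypothesis only to get a second branch value $v_1\neq u_1$, which already follows from $\deg Q_k\geq k\geq 2$ and the simplicity of $u_1$, so the hypothesis is never used where it is actually needed.

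Second, the spreading step requires the particular word $w_I$ with $w_I(q)$ close to $z_0$ to be non-constant, and your justification (the defining polynomial involves $z$, hence ``its branches are non-constant'') is a non sequitur: an individual branch is constant exactly when all $Q_j$, $0\leq j\leq k$, share a common zero, and the hypotheses exclude only $u_1$ from being such a zero. For example $k=2$, $Q_2(w)=w(w-1)(w-2)$, $Q_1\equiv 0$, $Q_0(w)=w$ is non-degenerate with all zeros of $Q_2$ simple and satisfies the lemma with $u_1=1$, yet $T_n$ has the constant branch $w\equiv 0$; every word containing that branch is constant, and you have not shown that a non-constant word landing near a given $z_0$ can always be chosen (attempting this near $z_0=0$ in the example is essentially circular with the perfectness you are trying to prove). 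The paper avoids openness of branches altogether: it pushes the nontrivial sequence $u_l\to u_1$ forward through $T_n^K$ using continuity of polynomial roots in the coefficients, and then bounds the number of $z$ with $w_0\in T_n^K(z)$ to guarantee that infinitely many of the pushed points differ from the putative isolated point $w_0$. To repair your argument you would need to import the paper's large-$n$ estimate for the first gap, and either rule out or route around constant branches for the second.
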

\begin{remark}
If $k\geq 2$ and all zeros of $Q_k$ are simple, $k\geq 2$ and at least one $Q_j(w)$, $j\neq k$ is not identically zero then, the hypothesis is fulfilled.
\end{remark}
\begin{proof}
Let $M$ be as in \cref{le:1}, $n$ be large and $u_1$ a simple zero of $Q_k(w)$. 
We will prove that if $z\in D(0,M), z\neq u_1$, then for all sufficiently large $n$, $u_1\notin T_n(z)$. To that end, suppose the contrary and let $z$ be such. Then $|u_1-z|\leq 2M$. We have that

$$\sum_{j=0}^{k-1}(n)_j Q_j(u_1)(u_1-z)^{k-j}=0,$$
which gives $$|Q_{J-1}(u_1)|\leq  \sum_{j=0}^{J-2}\left|\frac{(n)_j}{(n)_{J-1}} Q_j(u_1)(2M)^{J-j-1}\right|$$ where $J-1$ is the maximal value of $j$ such that $Q_j(u_1)\neq 0$, and the sum is 0 if $J = 1$. However, for large $n$, this is clearly false as the right-hand side tends to $0$ as $n\to \infty$, whereas the left-hand side does not. Hence, we can find a sequence $(u_l)_{l=2}^\infty$ such that $u_2\neq u_1$ $u_2\in T_n(u_1)$ and $u_{l+1}\in T_n^l(u_1)$, $u_l\neq u_1$ for $l\geq 2$ and $\lim_{l\to \infty}{u_l}=u_1$ by the proof of \cref{thm:A}.

Suppose now that $\minvset{H,n}$ is not a perfect set. As $\minvset{H,n}$ is closed, there then is an isolated point $w_0\in \minvset{H,n}$. As $w_0$ is an isolated point, then by \cref{th:implicit} ($n$ can be chosen large enough so that $T_n(\bC)\subset \bC$), $w_0$ belongs to $\cup_{j=0}^\infty T^j_n(u_1)$. Hence, there is some positive integer $K$ such that $w_0\in T^K_n(u_1)$. As we have seen above, there is a sequence of points $(u_l)^\infty_{l=2}$ not containing $u_1$ that converges to $u_1$. As the set of zeros of a polynomial is continuous in the coefficients, it follows that there is a sequence of points $(v_j)$ with $v_j\in T_n^K(u_j)$ converging to $z_0$; we may choose $v_j=\min_{v\in T_n^K(u_j)}|v_j-w_0|$. Now, there are at most $k^K$ distinct values of $z$ such that $w_0\in T_n^K(z)$. Indeed,

$$T[(w-z)^n]/(w-z)^{n-k}$$ 
with $w=w_0$ is a polynomial in $z$ of degree $k^K$.  Hence, at most finitely many of the $v_j$ are  equal to $w_0$. Therefore, there is a subsequence $(v_j)$ that does not contain $w_0$ and converges to $w_0$. This contradicts the assumption that $w_0$ is isolated. We conclude that $\minvset{H,n}$ is a perfect set.
\end{proof}

Once again, $w_l$, $l=1,..,d$  denote  the $d$ branches of $T_n(z)$ for $z\in D(0,M)$. Take now $n$ small enough so that $|w_l'(z)|<\frac{c}{d}$ for some $|c|<1$ for all $z\in \cup_j D(u_j,\eta_0)$. Suppose that there is a connected subset $C$ of $\supp(\mu_{T_n})$. Denote its diameter with $diam(C)$. We have that $\supp(\mu_{T_n})\subset (f_{n})^m(\cup_j D(u_j,\eta_0))$. We have that $(f_{n})^m(\cup_j D(u_j,\eta_0))$ is contained in the union of $d^m$ balls of radius $\eta_0\left(\frac{c}{d}\right)^m$. We pick $m$ large enough so that $2\eta_0 c^m<diam(C)$. This means that the diameter of any connected component of $(f_{n})^m(\cup_j D(u_j,\eta_0))$ has diameter at most $2\eta_0 c^m<diam(C),$ which contradicts the existence of $C$. We conclude that $\supp(\mu_{F_n})=\minvset{H,n}$ is totally disconnected and by \cref{le:perfect} it is a perfect set. We have that $\minvset{H,n}$ clearly is compact and metrizable as it is a closed subset of $\setRS$. It follows that it is a Cantor set. 

Let us now prove that each point in $\minvset{H,n}$ is the limit of a sequence of periodic points of $T_n$. For sufficiently large $n$, any periodic point of a $w_{I}$ in $D(0,M)$ must be attracting by the chain rule and the fact that $w_s'(u)<1/2$ for $n$ large.

Take $u_1\in \zeros(Q_k)$, $w_1(z)$ be the local branch of $T_n $ in $D(0,M)$ that fixes $u_1$ and let $w\in \minvset{H,n}$ be given. Suppose first that $w= w_{I}(u_1)$ for some multi-index $I$. Consider the sequence of maps $w_I\circ w_1^\eta$, $\eta\in \mathbb N$. They all map $D(0,M)$ to $D(0,M)$ so by Brouwer's fixed point theorem they each have a fixed point $u_\eta$ in $D(0,M).$ For any $\eta>0$ there is an $\epsilon_\eta>0$ such that $\epsilon_\eta\to 0$ and $D(0,M)$ is mapped to the $\epsilon_\eta$-neighborhood of $z$. In particular, $u_\eta\to w$.

Now suppose $w$ is the limit of points of the form $z_{I_j}(u_1)$ where the length of $I_j$, denoted by $\eta_j$ tends to $\infty$ as $j\to \infty$. After passing to a subsequence, we can suppose that $\eta_j$ is increasing. Let $\epsilon_1>0$ be given. Take $N_0$ to be such that $1/\eta_j<\epsilon_1/2$ if $j\geq N_0$ . For each $\eta_j\geq 1$, we can, using continuity of the $w_l$ and the fact that $|w_1'(z)|<1/2$ for $z\in D(0,M)$, find $K(\eta_j)$ such that diam$(w_{I_j}\circ w_{1}^{K(\eta_j)}(D(0,M)))< 1/\eta_j$. By the definition of $w$, we can also find $N_1$ large enough so that $|w-w_{I_j}(u_1)| < \epsilon_1/2$ for any $j \geq N_1$. Now, there is a fixed point of $w_{I_{j}}\circ w_0^{K(\eta_j)}$ contained in $w_{I_{j}}\circ w_0^{K(\eta_j)}(D(0,M))$. Moreover, $w_{I_j}(u_1) \in w_{I_{j}}\circ w_0^{K(\eta_j)}(D(0,M)) $  since $w\in D(0,M)$ and $w_1$ fixes $u_1$. Hence, by Brouwer's fixed point theorem there is a sequence of points $v_{I_j,K(\eta_j)}$ which are fixed points of $w_{I_{j}}\circ w_1^{K(\eta_j)}$ such that if $j\geq \max(N_0,N_1)$, then $|w-v_{I_j,K(\eta_j)}| < \epsilon_1$. It follows that $\lim_{j\to \infty} v_{I_j,K(\eta_j)}=w$. Thus, $w$ is a limit of a sequence of attracting fixed points in the two cases above. By \cref{th:implicit} these are the only cases to consider, and we have thereby proven the result.

This concludes the proof of \cref{thm:C}.

\end{proof}

Using precisely the same methods as above, we may prove the following statement, which we hinted at in \cref{sec:intro}.

\begin{proposition}\label{prop:prop1}
Suppose that for a non-degenerate $T$ given by \eqref{eq:1}, $Q_k(w)$ has a simple zero $u_0$. For any $\epsilon>0$, there is $N$ such that if $n\geq N$, then
\begin{itemize}\item $\minvset{H,n}$ exists, is contained in the $\epsilon$-neighborhood of the zeros of $Q_k$ and contains the simple zeros of $Q_k(w)$. \item If $k\geq 2$ and at least one $Q_j(w)$, $j\neq k$ is not identically zero then $\minvset{H,n}$ is a perfect set.\end{itemize}
\end{proposition}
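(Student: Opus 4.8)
The plan is to re-run the scheme behind \cref{thm:A} and \cref{thm:C}, with the single simple zero $u_0$ of $Q_k(w)$ playing the role that ``all zeros of $Q_k$ simple'' played in \cref{thm:C}. As in the proof of \cref{thm:C}, the polynomial defining $T_n$ equals, up to the non-vanishing factor $(n)_k$,
\[
Q_k(w)+\sum_{j=0}^{k-1}\frac{(n)_j}{(n)_k}(w-z)^{k-j}Q_j(w),
\]
which is of the shape $R_\beta$ from \cref{thm:A} with $R_0=Q_k$, $R_d\equiv 0$, the remaining coefficients the $Q_j$, and $\beta_{k-j}=(n)_{k-j}/(n)_k\to 0$; non-degeneracy of $T$ supplies the required degree bounds. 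The key observation is that \cref{le:1} uses nothing about the zeros of $R_0$ beyond the fact that the leading coefficient in $w$ of $R_0=Q_k$ is a fixed non-zero constant, so it applies verbatim: for $n$ large there is a disk $D(0,M)$ with $T_n(D(0,M))\subset D(0,M)$ (in particular $T_n(\bC)\subset\bC$) into which every orbit eventually falls. Likewise the implicit-differentiation computation in the proof of \cref{thm:A} needs only $|Q_k'|$ bounded below near $u_0$, i.e.\ simplicity of $u_0$, so for $n$ large there is a holomorphic branch $w_0$ of $T_n$ on $D(0,M)$ with $w_0(u_0)=u_0$ and $|w_0'|<\tfrac12$ near $u_0$; together with \cref{le:1} this produces, for every $a\in\bC$, a sub-orbit $b_m\in T_n^m(a)$ with $b_m\to u_0$.

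This converging sub-orbit is all that \cref{le:misha} requires, so for every compact $K\subset\bC$ with $T_n(K)\subset K$ the unitary spectrum of $A_K$ is $\{1\}$ with eigenspace the constants. The one genuine departure from \cref{thm:C} is almost periodicity: the equicontinuity estimate in the proof of \cref{thm:A} is run through the $d$ single-valued holomorphic branches of $T_n$ on $D(0,M)$, and these no longer exist once $Q_k$ has a zero of multiplicity $\ge 2$ (there the correspondence is genuinely branched and not locally Lipschitz). I would replace this step by an appeal to \cref{th:barali}: one checks that $d_0\le d_1$ (here $d_1=\deg_w=\deg Q_k$ and $d_0=\deg_z\le k\le\deg Q_k$, the last inequality being non-degeneracy); that $\mathcal A:=\omega(D(0,M))$ is a strong attractor of $T_n$ with base point $u_0$, the required converging sub-orbits being supplied by $w_0$ and \cref{le:1}; and that $\mathcal A$ is disjoint from the critical values of $T_n^\dagger$ for $n$ large --- reading $\sum_j Q_j(w)(n)_j(w-z)^{k-j}$ as a polynomial in $z$, all but at most one of its roots have modulus of order $n$ once $w$ stays a fixed distance from $\zeros(Q_k)$, and near a zero of $Q_k$ at most one $z$-root remains bounded, so the $z$-discriminant does not vanish there. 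Then \cref{th:barali} yields $\mu_{T_n}$ with $(T_n)_*\mu_{T_n}=d\,\mu_{T_n}$ and $\tfrac1{d^m}(T_n^m)_*\delta_x\to\mu_{T_n}$ on a neighborhood of $\mathcal A$, which \cref{le:1} upgrades to all $x\in\bC$ (alternatively, invoke \cref{dinhsibony} together with \cref{th:mayuresh}).

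Given the equidistribution, minimality of $\supp\mu_{T_n}$ among closed forward $T_n$-invariant sets meeting $\bC$ follows exactly as in the proof of \cref{thm:A}, so $\minvset{H,n}=\supp\mu_{T_n}$ exists; taking $\eta_0<\epsilon$ in the construction places it in the $\epsilon$-neighborhood of $\zeros(Q_k)$. Each simple zero $u$ of $Q_k$ is an attracting fixed point of $T_n$ for $n$ large (same derivative computation as at $u_0$), and it lies in $\minvset{H,n}=\overline{\bigcup_{j\ge 0}T_n^j(u_0)}$ because the $d$ roots (with multiplicity) of the displayed polynomial converge, as $n\to\infty$, to the zeros of $Q_k$, so $T_n(u_0)$ meets every $\eta_0$-neighborhood of a zero of $Q_k$ and the local branch at $u$ then attracts such a point to $u$. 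This proves the first bullet. For the second, \cref{le:perfect} applies as soon as $Q_k$ has a simple zero that is not a common zero of all the $Q_j$, $j<k$; under the stated hypotheses ($k\ge 2$, some $Q_j$ with $j\ne k$ not identically zero, together with the non-degeneracy bound $\deg Q_j<\deg Q_k$ for $j<k$) one selects such a zero, and then $\minvset{H,n}$ is perfect for $n$ large --- since $Q_k$ may have a multiple zero we do not, and cannot, claim that it is totally disconnected.

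I expect the main obstacle to be exactly this loss of the branch decomposition on $D(0,M)$: assuming only one simple zero of $Q_k$ makes the elementary equicontinuity argument of \cref{thm:A} unavailable, and the cleanest replacement --- routing through \cref{th:barali} or the Dinh--Sibony machinery --- transfers the burden onto verifying the strong-attractor hypothesis and, more delicately, the disjointness of $\supp\mu_{T_n}$ from the critical values of $T_n^\dagger$, which rests on the $n\to\infty$ asymptotics of the $z$-roots of $\sum_j Q_j(w)(n)_j(w-z)^{k-j}$. A secondary point to check carefully is that the hypotheses of the second bullet genuinely furnish a simple zero of $Q_k$ not annihilated by all of $Q_0,\dots,Q_{k-1}$, which is what \cref{le:perfect} needs.
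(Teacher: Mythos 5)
Your elementary steps (the reduction to the $R_\beta$ form, \cref{le:1}, and the contracting branch $w_0$ at the simple zero $u_0$) are correct, and in fact they already prove the first bullet without any measure: if $S$ is closed, forward invariant and contains $a\in\bC$, then by \cref{le:1} some $T_n^m(a)\subset D(0,M)$ lies in $S$; for $n$ large every $z\in D(0,M)$ has exactly one image point in $D(u_0,\eta_0)$, and iterating $w_0$ from it gives a sub-orbit converging to $u_0$, so $u_0\in S$; hence $\overline{\bigcup_{j\ge0}T_n^j(u_0)}\subset S$, and this set is itself closed and forward invariant by the argument of \cref{th:implicit}. Thus $\minvset{H,n}=\overline{\bigcup_{j\ge0}T_n^j(u_0)}$ exists, and the $\epsilon$-containment and the membership of every simple zero of $Q_k$ follow exactly as in your closing paragraph. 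This is what the paper means by ``the same methods'': \cref{prop:prop1} asserts no equidistribution, so the almost-periodicity step you correctly identify as broken is simply not needed. The genuine gap is in what you put in its place. The hypothesis of \cref{th:barali} that does the work is that the attractor avoid the locus of $z_0$ over which the fiber $T_n(z_0)$ has a repeated point (only then do the $\deg Q_k$ forward branches exist near the attractor, which is what the Lyubich-type argument uses); your sketch instead estimates the roots in $z$ for fixed $w$, i.e.\ the fibers of $T_n^\dagger$, which is a different discriminant. Near a multiple zero of $Q_k$ both the attractor and the zeros in $z$ of the $w$-discriminant accumulate at comparable scales, so the required disjointness is precisely the delicate point and is left unverified; moreover, if all the $Q_j$ share a zero $w_0$ the defining polynomial acquires the factor $(w-w_0)$ and \cref{th:barali}/\cref{dinhsibony} as stated do not apply at all. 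The fallback route also fails: \cref{dinhsibony} needs $d_0<d_1$ strictly, and \cref{th:mayuresh} only gives minimality among \emph{non-pluripolar} closed invariant sets, whereas closed forward invariant sets of $T_n$ can be countable (hence pluripolar), so it cannot identify $\supp\mu_{T_n}$ with $\minvset{H,n}$. (Small point: non-degeneracy does not give $k\le\deg Q_k$; the inequality $d_0\le d_1$ comes from $k-j\le\deg Q_k-\deg Q_j$ whenever $Q_j\not\equiv0$.)

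For the second bullet you do follow the paper (apply \cref{le:perfect}), but the selection argument you offer does not close the issue you yourself flag: $\deg Q_j<\deg Q_k$ does not produce a simple zero of $Q_k$ at which some $Q_j$, $j<k$, is non-vanishing, since the unique guaranteed simple zero may well be a zero of every $Q_j$ (e.g.\ $Q_k=(w-1)(w+1)^2$, $Q_{k-1}=(w-1)^2$). So as written your proof of perfectness only covers the case in which a simple zero of $Q_k$ outside the common zero set of $Q_0,\dots,Q_{k-1}$ exists, i.e.\ exactly the hypothesis of \cref{le:perfect}; this remains an open step of your argument rather than a consequence of the stated assumptions.
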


\section{Final Remarks} \label{sec:final}

This paper barely scratches the surface of the study of invariant sets under holomorphic correspondences and there are multiple natural questions open. We name merely a few of the questions left unanswered in this paper.

\begin{problem}
Give necessary and sufficient conditions for when $\minvset{H,n}$ exists.
\end{problem}
\begin{problem}
In this paper, we analyzed the case when $n$ is large. What can be said for small $n$?
\end{problem}
\begin{problem}
Show that for large $n$, any zero of $Q_k$ belongs to $\minvset{H,n}$.
\end{problem}

\bibliographystyle{alphaurl}
\typeout{}
\bibliography{./theBibliography}

\end{document}